 \def\E{\mathbb{E}}
\newcommand{\tbf}[1]{\textbf{#1}}
\newtheorem{theorem}{Theorem}[section]
\newtheorem{lemma}[theorem]{Lemma} 
\newtheorem{proposition}[theorem]{Proposition} 
\newtheorem{corollary}[theorem]{Corollary}
\theoremstyle{definition}
\newtheorem{definition}{Definition}
\newcommand{\wo}{\setminus} 
\newcommand{\up}[1]{\left\lceil{#1}\right\rceil} 
\newcommand{\abs}[1]{\left|{#1}\right|}
\newcommand{\set}[1]{\left\{{#1}\right\}} 
\newcommand{\setof}[2]{\left\{{#1}\,:\,{#2}\right\}} 
\newcommand{\of}{\subseteq}
\newcommand{\op}[1]{\left({#1}\right)}
\newcommand{\symd}{\bigtriangleup}
\newcommand{\adj}{\sim}
\newcommand{\nadj}{\not\adj}
\newcommand{\Gbar}{\overline{G}}
\newcommand{\Gxy}{G_{x\to y}}
\newcommand{\I}{\mathcal{I}}
\newcommand{\Tbar}{\overline{T}}
   \def\MR#1{}
\title{Nordhaus-Gaddum inequalities for the number of cliques in a graph}
\author{Deepak Bal\thanks{Department of Mathematics, Montclair State University, Montclair, NJ, 07043 U.S.A. \texttt{deepak.bal@montclair.edu}}\and  Jonathan Cutler\thanks{Department of Mathematics, Montclair State University, Montclair, NJ, 07043 U.S.A. \texttt{jonathan.cutler@montclair.edu}}\and Luke Pebody\thanks{\texttt{luke@pebody.org}}}
\date{}
\begin{document}
\maketitle

\begin{abstract}
	Nordhaus and Gaddum proved sharp upper and lower bounds on the sum and product of the chromatic number of a graph and its complement.  Over the years, similar inequalities have been shown for a plenitude of different graph invariants.  In this paper, we consider such inequalities for the number of  cliques (complete subgraphs) in a graph $G$, denoted $k(G)$.  We note that some such inequalities have been well-studied, e.g., lower bounds on $k(G)+k(\overline{G})=k(G)+i(G)$, where $i(G)$ is the number of independent subsets of $G$, has been come to be known as the study of Ramsey multiplicity.  We give a history of such problems.  One could consider fixed sized versions of these problems as well.  We also investigate multicolor versions of these problems, meaning we $r$-color the edges of $K_n$ yielding graphs $G_1,G_2,\ldots,G_r$ and give bounds on $\sum k(G_i)$ and $\prod k(G_i)$.  
\end{abstract}

\section{Introduction}

Extremal results in graph theory often involve maximizing or minimizing some graph invariant over graphs on a fixed number of vertices with some sort of restriction on the number of edges.  The restriction on the number of edges can be dispensed with if one considers both the graph and its complement simultaneously.  Nordhaus and Gaddum \cite{NG} gave upper and lower bounds on the sum and product of the chromatic numbers of a graph on $n$ vertices and its complement.

\begin{theorem}[Nordhaus, Gaddum 1956]
	If $G$ is a graph on $n$ vertices, then
	\[
		2\sqrt{n}\leq \chi(G)+\chi(\Gbar)\leq n+1,
	\]
	and
	\[
		n\leq \chi(G)\chi(\Gbar)\leq \left(\frac{n+1}2\right)^2.
	\]
\end{theorem}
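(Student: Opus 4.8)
The plan is to prove the two substantive inequalities directly --- the lower bound $\chi(G)\chi(\Gbar)\geq n$ on the product and the upper bound $\chi(G)+\chi(\Gbar)\leq n+1$ on the sum --- and then obtain the remaining two bounds essentially for free from the AM--GM inequality.

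For the product lower bound, I would fix an optimal proper coloring of $G$ with $\chi(G)$ colors. Each color class is an independent set of $G$ and hence a clique of $\Gbar$. By pigeonhole the largest color class has at least $n/\chi(G)$ vertices, and a clique on that many vertices cannot be properly colored with fewer than $n/\chi(G)$ colors, so $\chi(\Gbar)\geq n/\chi(G)$, i.e.\ $\chi(G)\chi(\Gbar)\geq n$. Then AM--GM gives $\chi(G)+\chi(\Gbar)\geq 2\sqrt{\chi(G)\chi(\Gbar)}\geq 2\sqrt n$, the sum lower bound; and, once we have the sum upper bound, $\chi(G)\chi(\Gbar)\leq\left(\tfrac{\chi(G)+\chi(\Gbar)}{2}\right)^2\leq\left(\tfrac{n+1}{2}\right)^2$, the product upper bound.

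It remains to show $\chi(G)+\chi(\Gbar)\leq n+1$, which I would do by induction on $n$, the case $n=1$ being immediate. For $n\geq 2$, delete a vertex $v$, noting that the complement of $G-v$ is $\Gbar-v$. The key observation is a dichotomy: either $\chi(G)=\chi(G-v)$, or $\deg_G(v)\geq\chi(G-v)$ --- for if $v$ had fewer than $\chi(G-v)$ neighbors in $G$, then a free color would remain when extending an optimal coloring of $G-v$ to $v$. The same applies to $\Gbar$. If $\chi(G)=\chi(G-v)$ (or, symmetrically, $\chi(\Gbar)=\chi(\Gbar-v)$), then combining $\chi(\Gbar)\leq\chi(\Gbar-v)+1$ with the inductive hypothesis $\chi(G-v)+\chi(\Gbar-v)\leq n$ yields $\chi(G)+\chi(\Gbar)\leq n+1$. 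Otherwise both degree bounds hold, and since $\deg_G(v)+\deg_{\Gbar}(v)=n-1$ we get $\chi(G)+\chi(\Gbar)\leq(\deg_G(v)+1)+(\deg_{\Gbar}(v)+1)=n+1$.

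The main obstacle is the sharpness of the sum upper bound: the crude induction that applies $\chi(G)\leq\chi(G-v)+1$ and $\chi(\Gbar)\leq\chi(\Gbar-v)+1$ simultaneously only gives $n+2$. The degree-counting dichotomy above is exactly what is needed to shave off the extra $1$, and the only care required is in checking the small or degenerate cases (for instance when $v$ is isolated in one of the two graphs, or when both chromatic numbers drop upon deleting $v$).
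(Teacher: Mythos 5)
This is a background result that the paper only cites (attributing it to Nordhaus and Gaddum, with the product lower bound credited to Zykov) and never proves, so there is no in-paper argument to compare against. Your proof is correct and is essentially the classical one: the color-class-as-clique argument giving $\chi(G)\chi(\Gbar)\ge n$, AM--GM to deduce $\chi(G)+\chi(\Gbar)\ge 2\sqrt{n}$ and $\chi(G)\chi(\Gbar)\le\left(\frac{n+1}{2}\right)^2$, and the standard induction with the degree dichotomy (either $\chi(G)=\chi(G-v)$ or $\deg_G(v)\ge\chi(G-v)$) to establish $\chi(G)+\chi(\Gbar)\le n+1$.
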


In fact, as Nordhaus and Gaddum noted, the lower bound on $\chi(G)\chi(\Gbar)$ was already known as it had been proved by Zykov \cite{Z}.  Prompted by this result, there has been an abundance of papers that have studied Nordhaus-Gaddum inequalities for a variety of graph invariants.  (See \cite{AH} for a survey.)

More recently, there has been interest in studying Nordhaus-Gaddum inequalities for the \emph{number} of certain sets in a graph.  For example, Wagner \cite{W} gave the following lower bound on $\partial(G)+\partial(\Gbar)$, where $\partial(G)$ is the number of dominating sets in a graph $G$.

\begin{theorem}[Wagner 2013]
	If $G$ is a graph on $n$ vertices, then
	\[
		\partial(G)+\partial(\Gbar)\geq 2^n.
	\]
\end{theorem}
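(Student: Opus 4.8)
The plan is to reduce the inequality to an elementary statement relating a vertex subset to its complement, and then close the argument by double-counting over all $2^n$ subsets of $V=V(G)$. Throughout, write $D(H)$ for the family of dominating sets of a graph $H$, so $\partial(H)=|D(H)|$; the target is $|D(G)|+|D(\overline G)|\ge 2^n$.

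The heart of the proof is the claim: \emph{for every $S\subseteq V$, either $S\in D(G)$ or $V\setminus S\in D(\overline G)$.} I would prove this directly. Suppose $S\notin D(G)$. Then some vertex $v\notin S$ is not dominated by $S$ in $G$, i.e.\ $v$ has no $G$-neighbour in $S$; since also $v\notin S$, every vertex of $S$ is adjacent to $v$ \emph{in} $\overline G$. Now $v\in V\setminus S$, and every vertex of $S$ has $v$ as an $\overline G$-neighbour lying in $V\setminus S$, so $V\setminus S$ dominates $\overline G$. I want to flag that the ``obvious'' variant, ``$S\in D(G)$ or $S\in D(\overline G)$,'' is \emph{false} — a single vertex that has both a neighbour and a non-neighbour dominates neither $G$ nor $\overline G$ — so the step of passing to the complement $V\setminus S$ is exactly the point, and is the one place the argument could go wrong if attempted naively.

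Granting the claim, the map $S\mapsto V\setminus S$ is a bijection of $2^V$ that sends $2^V\setminus D(G)$ into $D(\overline G)$; hence $2^n-\partial(G)=|2^V\setminus D(G)|\le|D(\overline G)|=\partial(\overline G)$, which rearranges to the theorem. (Equivalently, sum the inequality $\mathbf 1[S\in D(G)]+\mathbf 1[V\setminus S\in D(\overline G)]\ge 1$ over all $2^n$ choices of $S$, using that $S\mapsto V\setminus S$ is a bijection.) There is essentially no computational obstacle here — the only real work is isolating the correct formulation of the claim — and it is worth noting that the bound is sharp, e.g.\ $\partial(K_n)+\partial(\overline{K_n})=(2^n-1)+1=2^n$.
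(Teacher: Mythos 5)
Your proof is correct. The paper does not actually prove this statement --- it is quoted as Wagner's theorem with a citation --- so there is no in-paper argument to compare against; your route (if $S$ fails to dominate $G$, witnessed by an undominated vertex $v\notin S$, then $v$ is an $\overline{G}$-neighbour of every vertex of $S$, so $V\setminus S$ dominates $\overline{G}$, and complementation then gives $2^n-\partial(G)\leq \partial(\overline{G})$) is the standard short argument, essentially Wagner's original one, and your sharpness example $\partial(K_n)+\partial(\overline{K_n})=2^n$ is also right.
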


Keough and Shane \cite{KS}, prompted by a question of Wagner, proved an upper bound on $\partial(G)+\partial(\Gbar)$ that is sharp in the lead term.  

We will be interested in Nordhaus-Gaddum-type inequalities for $i(G)$, the number of independent sets in $G$, and $k(G)$, the number of cliques (complete subgraphs) in $G$.  We also let $i_t(G)$ be the number of independent sets of size $t$ in $G$ and, likewise, $k_t(G)$ be the number of cliques of size $t$ in $G$.  It should be noted that $i_0(G)=k_0(G)=1$ for any graph $G$ and $i_1(G)=k_1(G)=n$ for any graph $G$ on $n$ vertices.

To introduce some (more) notation, we are interested in studying the behavior of $\sigma(G)=i(G)+i(\Gbar)=i(G)+k(G)$ and $\pi(G)=i(G)i(\Gbar)=i(G)k(G)$ as $G$ ranges over all graphs on $n$ vertices.  We are also interested in fixed size versions of these quantities and so let $\sigma_{t}(G)=i_{t}(G)+k_{t}(G)$ and $\pi_{t}(G)=i_{t}(G)k_{t}(G)$.  Bounds on these have been studied in various other contexts and we will present those via Nordhaus-Gaddum inequalities.  Hu and Wei \cite{HW} gave the following upper bound on $\sigma(G)$.

\begin{theorem}[Hu, Wei 2018]\label{thm:HW}
	If $G$ is a graph on $n$ vertices, then
	\[
		\sigma(G)\leq 2^n+n+1,
	\]
	with equality if and only if $G$ is $K_n$ or $E_n$.
\end{theorem}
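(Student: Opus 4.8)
The plan is to bound $i(G)$ and $k(G) = i(\Gbar)$ separately by a common quantity and then exploit the trade-off between them. The key observation is that an independent set of $G$ is precisely a clique of $\Gbar$, so every subset $S \subseteq V(G)$ contributes to at most one of $i(G)$ and $k(G)$ unless $S$ has at most one vertex (since a set of size $\ge 2$ cannot be both independent and a clique). Thus the sets of size $0$ and $1$ are counted in both $i(G)$ and $k(G)$, contributing $n+1$ to each, while every set of size $\ge 2$ is counted in at most one of the two. This already gives $\sigma(G) = i(G) + k(G) \le 2^n + (n+1)$, because the $2^n$ subsets of $V(G)$ are partitioned into: those counted by neither, those counted by exactly one, and the $n+1$ small sets counted by both, so $i(G) + k(G) \le 2^n + (n+1)$ with equality iff every subset of size $\ge 2$ is either independent in $G$ or a clique in $G$ (equivalently, independent in $\Gbar$ or a clique in $\Gbar$).

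The remaining work is the characterization of equality. Equality forces every pair $\{u,v\}$ to be an edge of $G$ or an edge of $\Gbar$ — which is automatic — but more importantly it forces every triple, and indeed every larger set, to be monochromatic in the $2$-coloring of $E(K_n)$ given by $E(G)$ and $E(\Gbar)$. So the first step is: equality holds iff there is no subset $S$ of size $\ge 2$ that is neither a clique nor an independent set of $G$. I would then argue that if $G$ is neither complete nor empty, such a "mixed" set exists. Indeed, if $G$ has at least one edge $uv$ and at least one non-edge $xy$, then by connectivity-type reasoning one can find three vertices spanning both an edge and a non-edge: if the edge and non-edge share a vertex this is immediate, and if they are disjoint, consider the pairs among $\{u,v,x,y\}$ — not all four of $ux, uy, vx, vy$ can be edges (else together with $uv$ we would need $xy$ to be a non-edge inside an otherwise-complete $4$-set, giving a mixed triple) and not all can be non-edges (symmetrically), so some triple is mixed. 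Hence equality requires $G \in \{K_n, E_n\}$, and conversely both of these clearly achieve $\sigma(G) = 2^n + n + 1$ since for $K_n$ we have $k(K_n) = 2^n$ and $i(K_n) = n+1$ (and symmetrically for $E_n$).

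The main obstacle, such as it is, is purely in writing the equality case cleanly: the counting bound itself is essentially immediate once one views subsets of $V(G)$ as being sorted by whether they are cliques, independent sets, both, or neither. I would present the disjoint-edge/non-edge case of the characterization carefully, perhaps by just noting that a mixed triple exists whenever $G$ is not complete and not empty (a standard fact: the only graphs in which every triple of vertices spans $0$, $1$... wait — in which every triple spans either $0$ or $3$ edges are the disjoint unions... no: every triple spanning $0$ or $3$ edges means the relation "$uv \in E(G)$" is an equivalence relation with all classes of the same behavior, forcing $G$ to be a disjoint union of cliques that is simultaneously a complete multipartite graph, hence $K_n$ or $E_n$). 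That is the cleanest route: equality $\iff$ the edge relation is "transitive in both directions" $\iff$ $G$ is both a cluster graph and a complete multipartite graph $\iff$ $G \in \{K_n, E_n\}$.
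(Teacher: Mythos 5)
Your proposal is correct and follows essentially the same argument the paper gives (in its remark after the theorem statement): every subset of size at least $2$ can be counted by at most one of $i(G)$ and $k(G)$, the $n+1$ sets of size at most $1$ are counted by both, and equality forces every subset to be a clique or independent set, which happens only for $K_n$ and $E_n$. Your filling-in of the equality case via mixed triples is a fine elaboration of the same idea, not a different route.
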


In fact, Hu and Wei were more interested in studying Nordhaus-Gaddum inequalities for connected graphs, and Theorem~\ref{thm:HW} was a corollary of their main result.  We note, however, that this result is, in fact, rather easy to prove as every subset of a graph can be either empty or complete, but not both unless it is the empty set or a single vertex.  The only graphs in which every subset of the vertex set is either complete or empty (or both) are $K_n$ and $E_n$.

As for a lower bound on $\sigma(G)$, it should be noted this problem is equivalent to finding the minimum number of complete monochromatic subgraphs in a $2$-edge-coloring of $K_n$.  This is a well-studied problem that is known as \emph{Ramsey multiplicity}. We use $\log$ to represent the natural logarithm throughout this paper.
Feige, Kenyon and Kogan proved the following in \cite{FKK}.
\begin{theorem}[Feige et al. 2020]
	If $G$ is a graph on $n$ vertices, then
	\[
		\sigma(G)\geq n^{(\frac14 +o(1)) \log_2 n}\ge n^{0.36\log n}.
	\]
\end{theorem}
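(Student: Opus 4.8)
The plan is to exploit the identity, already noted in the excerpt, that $\sigma(G)=i(G)+k(G)$ counts exactly the complete subgraphs that are monochromatic in the $2$-edge-colouring of $K_n$ whose colour classes are $E(G)$ and $E(\Gbar)$; equivalently, we must show that every $2$-edge-colouring of $K_n$ spans at least $n^{(1/4+o(1))\log_2 n}$ monochromatic cliques. Since $\sigma(G)\ge i_m(G)+k_m(G)$ for every $m$, and $i_m(G)+k_m(G)$ is precisely the number of monochromatic $m$-cliques, it suffices to find one value $m=m(n)$ for which there are many monochromatic $K_m$'s, and to optimise the choice of $m$ at the end.

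First I would invoke a supersaturation (averaging) form of the quantitative Ramsey bound. By Erd\H{o}s--Szekeres, $R(m,m)\le\binom{2m-2}{m-1}<4^m$, so every set of $4^m$ vertices spans a monochromatic $K_m$. Choosing a uniformly random $4^m$-subset $S$ of $V(K_n)$, linearity of expectation gives that the expected number of monochromatic $K_m$'s inside $S$ is at least $1$; since each fixed monochromatic $K_m$ sits inside a $\binom{n-m}{4^m-m}/\binom{n}{4^m}$ proportion of those subsets, the identity $\binom{n}{4^m}\binom{4^m}{m}=\binom{n}{m}\binom{n-m}{4^m-m}$ yields
\[
  i_m(G)+k_m(G)\ \ge\ \frac{\binom nm}{\binom{4^m}m}\ \ge\ \Big(\frac{n}{e\,4^m}\Big)^{m},
\]
using $\binom nm\ge (n/m)^m$ and $\binom{4^m}m\le(e\,4^m/m)^m$. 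Taking logarithms,
\[
  \log\big(i_m(G)+k_m(G)\big)\ \ge\ m\log n - m^2\log 4 - m ,
\]
and optimising over $m=\Theta(\log n)$ makes the right side of order $(\log n)^2$. Converting back to the normalisation $n^{(\,\cdot\,)\log n}$, this already gives $\sigma(G)\ge n^{\Omega(\log n)}$, the qualitative core of the statement.

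The genuinely delicate part is the value of the constant. Optimising the last display directly puts its maximum at $m=\tfrac14\log_2 n$ and produces only $\sigma(G)\ge n^{(1/8+o(1))\log_2 n}$, which is a factor $2$ short of the claimed exponent $\tfrac14\log_2 n=\tfrac1{4\log 2}\log n\ (\ge 0.36\log n)$. The loss is structural: a single application of the averaging argument extracts just one monochromatic $K_m$ per Ramsey-sized window and discards everything else, whereas a dense coloured $K_n$ contains many monochromatic cliques of many different sizes whose counts should reinforce one another multiplicatively. To recover the sharp constant I would replace the one-shot supersaturation by a recursive argument on $n$ -- for instance, iterating through $\Theta(\log n)$ ``levels'' (passing at each level to a majority-coloured neighbourhood of a well-chosen vertex, or to a balanced split of the vertex set) and tracking the multiplicative gain in the clique count at each level. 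The main obstacle, and what I expect to be the real work, is the bookkeeping: showing that the per-level gains accumulate to exactly $\tfrac14(\log_2 n)^2$ in the exponent -- equivalently, that a natural recursive colouring is asymptotically extremal -- and it is precisely this optimisation that is responsible for the $o(1)$ error term.
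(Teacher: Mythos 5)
Your proposal does not actually reach the stated bound. The supersaturation step is correct but, as you yourself compute, it is structurally capped at $\sigma(G)\ge n^{(1/8+o(1))\log_2 n}$: lower-bounding a single fixed-size count $i_m(G)+k_m(G)$ by ``one monochromatic $K_m$ per Ramsey window'' can never recover the exponent $\tfrac14\log_2 n$, and the passage from $1/8$ to $1/4$ is left as an unexecuted plan (``recursive argument\dots the real work is the bookkeeping''). Since the claimed constant is exactly what the theorem asserts, this is a genuine gap, not a routine omission.

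The missing idea is the one the paper (following Feige, Kenyon and Kogan) uses in Section~\ref{sec:lowerpi}, and your aside about ``majority-coloured neighbourhoods'' is its germ without the two devices that make it work. One counts \emph{good sequences}: orderings $(v_1,\ldots,v_q)$ of distinct vertices such that all later vertices lie in a single monochromatic neighbourhood of $v_i$, for each $i$. Passing at each step to the majority colour of the current vertex shows there are at least $\prod_{i=1}^{m}a_i$ such sequences, where $a_1=n$, $a_{k+1}=\up{(a_k-1)/2}$, so with $m\approx\log_2 n$ this is $2^{\frac12 m^2-O(m)}$. On the other hand, the vertices of a good sequence split (according to the colour used at each step) into a clique of $G$ and a clique of $\Gbar$, and a fixed $m$-set underlies at most $m!$ sequences, so the count is at most $m!\,k(G)\,i(G)$. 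Comparing the two bounds gives a lower bound on the \emph{product}, $\pi(G)=k(G)i(G)\ge n^{(\frac12+o(1))\log_2 n}$ (this is Theorem~\ref{thm:r-col-lb} with $r=2$), and only then does one pass to the sum via AM--GM, $\sigma(G)\ge 2\sqrt{k(G)i(G)}\ge n^{(\frac14+o(1))\log_2 n}$. The crucial points you are missing are (i) bounding the product of the two clique counts rather than a single monochromatic count of one fixed size, and (ii) the double counting of ordered nested-neighbourhood chains that produces that product bound; without them your approach stalls at the $1/8$ exponent you obtained.
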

This was an improvement over a result of Sz\'ekely \cite{Sz} from 1984 which showed that $\sigma(G)\geq n^{0.2275\log n}$ On the other hand, the random graph shows that this bound cannot be improved beyond $n^{(\frac12 +o(1))\log_2 n}\approx n^{0.72\log n}$. See the proof of Proposition \ref{prop:rgtight} for details.



The first of the main result of this paper is the following, which bounds $\pi(G)$ both above and below.

\begin{theorem}\label{thm:pi}
For any graph $G$ on $n$ vertices \[ n^{\op{\frac{1}{2}+o(1)}\log_2 n} \le \pi(G) \le (n+1)2^n.\] Furthermore, the upper bound is attained by $K_n$ and $E_n$. 
\end{theorem}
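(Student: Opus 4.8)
The plan is to handle the two inequalities by quite different means: the upper bound reduces to a finite combinatorial identity, whereas the lower bound rests on the \emph{proof} (not just the statement) of the Feige--Kenyon--Kogan estimate.

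\emph{Upper bound.} I would split the pairs counted by $\pi(G)=i(G)k(G)$ according to the union of the two sets: for $U\of V(G)$ put $p(U)=\#\setof{(I,K)}{I\text{ independent},\ K\text{ a clique},\ I\disj K=U}$, so that $\pi(G)=\sum_{U\of V(G)}p(U)$ since $I\disj K$ is determined by the pair. The crux is $p(U)\le 2\abs U+1$. As two vertices of $I\cap K$ would be simultaneously adjacent and non-adjacent, $R:=I\cap K$ satisfies $\abs R\le1$. If $R=\0$, then $(K,I)$ is an ordered split partition of $G[U]$ (a clique and an independent set covering $U$), and I would use the lemma that an $m$-vertex graph has at most $m+1$ ordered split partitions: taking $(C^*,I^*)$ with $\abs{C^*}=\omega$ maximal among split partitions, any split partition $(C,I)$ has $\abs{C^*\cap I}\le 1$ and $C^*\of C\disj I$, so $\abs{C\cap C^*}\ge\omega-1$ and hence $\abs C\in\set{\omega-1,\omega}$; a short case analysis (do the $\omega$-cliques that occur share a common $(\omega-1)$-set, or lie in a common $(\omega+1)$-set?) then bounds their total number by $m+1$ --- essentially the Hammer--Simeone description of split graphs. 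If $R=\set v$, then $K\wo v\of N_{G[U]}(v)$ and $I\wo v\of U\wo N_{G[U]}[v]$, and since these two sets already cover $U\wo\set v$ we must have $K\wo v=N_{G[U]}(v)$ and $I\wo v=U\wo N_{G[U]}[v]$, so there is at most one pair for each $v\in U$. Thus $p(U)\le(\abs U+1)+\abs U=2\abs U+1$, and $\pi(G)\le\sum_{U\of[n]}(2\abs U+1)=2\cdot n\,2^{n-1}+2^{n}=(n+1)2^{n}$. For the equality statement: $p(U)=1+2\abs U$ for all $U$ when $G\in\set{K_n,E_n}$ (direct computation), while if $G\notin\set{K_n,E_n}$ then $G$ has three vertices inducing $P_3$ or its complement $\overline{P_3}$, and for that $3$-set $p=5<7$, so the bound is strict; hence $K_n,E_n$ are in fact the only extremal graphs. (The structural fact used: a $P_3$-free graph is a disjoint union of cliques, a $\overline{P_3}$-free graph is the complement of one, and the only graphs that are both are $K_n$ and $E_n$.)

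\emph{Lower bound.} Since $i(G)\ge n+1$ and $k(G)\ge n+1$ always, $\pi(G)\ge(n+1)^2$; the content is $\pi(G)\ge n^{(1/2+o(1))\log_2 n}$. Note $\pi(G)=\pi(\Gbar)$. If $\omega(G)\ge\sqrt n$ or $\alpha(G)\ge\sqrt n$, then $k(G)\ge 2^{\sqrt n}$ or $i(G)\ge 2^{\sqrt n}$, which already exceeds $n^{(1/2+o(1))\log_2 n}$ (because $\sqrt n\gg(\log_2 n)^2$), so I may assume both $\omega(G)<\sqrt n$ and $\alpha(G)<\sqrt n$. In this ``Ramsey regime'' I would re-run the argument underlying the Feige--Kenyon--Kogan theorem, which is symmetric in cliques and independent sets: rather than producing a single large monochromatic family, one checks it produces $n^{(1/4+o(1))\log_2 n}$ cliques \emph{and}, separately, $n^{(1/4+o(1))\log_2 n}$ independent sets (equivalently, one phrases their counting as a count of compatible clique--independent-set pairs, of which there are exactly $i(G)k(G)=\pi(G)$), and multiplying yields $\pi(G)\ge n^{(1/2+o(1))\log_2 n}$.

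\emph{Where the difficulty lies.} The subtle point is precisely this doubling of the exponent in the lower bound. The bare \emph{statement} $\sigma(G)\ge n^{(1/4+o(1))\log_2 n}$ gives only $\pi(G)\ge i(G)+k(G)-1\ge n^{(1/4+o(1))\log_2 n}$, and AM--GM runs the wrong way since $\pi(G)\le(\sigma(G)/2)^2$; one genuinely has to go inside the Feige--Kenyon--Kogan proof and control the product $i(G)k(G)$ --- equivalently $\min\set{i(G),k(G)}$ in the Ramsey regime --- rather than the sum. The upper bound, by contrast, I expect to go through routinely once the split-partition lemma is recorded.
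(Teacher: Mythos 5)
Your upper bound is correct but takes a genuinely different route from the paper. The paper compresses $G$ to a threshold graph (compression does not decrease $i(G)$ or $i(\overline G)$, hence does not decrease $\pi$) and then inducts on $n$ by peeling off the last dominating vertex of the threshold code; you instead stratify the pairs $(I,K)$ by their union $U$ and prove the local bound $p(U)\le 2|U|+1$, which sums to exactly $(n+1)2^n$. Both ingredients of your local bound check out: the $|I\cap K|=\{v\}$ pairs are indeed forced (at most one per $v\in U$), and the bound of $|U|+1$ on ordered split partitions of $G[U]$ is a true folklore fact --- the cleanest route is to note that any two clique-sides $C_1,C_2$ of split partitions satisfy $|C_1\setminus C_2|\le 1$ and $|C_2\setminus C_1|\le 1$ (since $C_1\setminus C_2$ lies in both a clique and an independent set), after which the case analysis you indicate closes. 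Your argument buys a characterization of the extremal graphs (every graph other than $K_n$, $E_n$ contains an induced $P_3$ or $\overline{P_3}$, where the local bound is strict), which the paper does not attempt; the paper's compression machinery, on the other hand, is reused later for the fixed-size product $\pi_t$, where your union-stratification would not localize as cleanly.

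For the lower bound you correctly diagnose the essential point --- the quoted statement $\sigma(G)\ge n^{(1/4+o(1))\log_2 n}$ is useless and one must control the product inside the Feige--Kenyon--Kogan proof --- and your parenthetical reformulation (lower-bound the number of clique--independent-set pairs, which is at most $i(G)k(G)$) is exactly what the paper does: it counts ``good sequences'' $(v_1,\dots,v_q)$, shows there are at least $\prod a_i\ge 2^{m^2/2-O(m\log m)}$ of them where $a_{k+1}=\lceil (a_k-1)/2\rceil$, and observes that each (clique, independent set) pair arises from at most $q!$ good sequences. However, your primary framing --- that in the Ramsey regime the argument yields $n^{(1/4+o(1))\log_2 n}$ cliques \emph{and separately} that many independent sets --- is not something the good-sequence count delivers: it lower-bounds the number of pairs, i.e.\ the product, and nothing prevents that product from being lopsided even when $\omega(G),\alpha(G)<\sqrt n$. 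The reduction to the Ramsey regime is therefore unnecessary, and the ``separately many of each'' claim should be dropped in favor of the pair count, which you do state. As written, the lower bound is a correct identification of the paper's method but remains a sketch: the recursion for the $a_i$, the $q!$ overcount, and the choice $q=\lfloor\log_2 n\rfloor$ still need to be executed.
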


We prove the upper bound in Section~\ref{sec:upperpi} using compression.  The proof of the lower bound uses main idea of the proof of Feige et al.~\cite{FKK} which appears in Section~\ref{sec:lowerpi}.  The random graph $G(n,1/2)$ shows that the lower bound cannot be improved beyond $n^{(1+o(1))\log_2 n}$.

\begin{proposition}\label{prop:rgtight}Let $G\sim G(n,1/2)$. Then
$\pi(G) \le n^{(1+o(1))\log_2 n}$ with high probability.
\end{proposition}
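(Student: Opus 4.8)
The plan is to show that, with high probability, both $i(G)$ and $k(G)$ are at most $n^{(1/2+o(1))\log_2 n}$ when $G\sim G(n,1/2)$; multiplying the two bounds then gives the claimed bound on $\pi(G)=i(G)k(G)$. By complementation symmetry ($\overline{G}$ is also distributed as $G(n,1/2)$), it suffices to bound $k(G)=\sum_{t\ge 0} k_t(G)$, and the same argument applied to $\overline G$ controls $i(G)$.

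First I would estimate $\E[k_t(G)]=\binom{n}{t}2^{-\binom{t}{2}}$. The key observation is that this quantity is maximized (up to the usual fluctuations) around $t\approx 2\log_2 n$, and that $\binom{n}{t}2^{-\binom{t}{2}}\le n^t 2^{-\binom t2}$, whose logarithm to base $2$ is $t\log_2 n-\binom t2=t\log_2 n-\tfrac{t(t-1)}{2}$. This is a downward parabola in $t$ with maximum value $(\tfrac12+o(1))(\log_2 n)^2$ attained near $t=\log_2 n+\tfrac12$, so $\E[k_t(G)]\le n^{(1/2+o(1))\log_2 n}$ for every $t$. Summing over the at most $n+1$ values of $t$ only changes the bound by a factor $n+1=n^{o(\log n)}$, so $\E[k(G)]\le n^{(1/2+o(1))\log_2 n}$. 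Applying Markov's inequality — say, $\Pr[k(G)\ge n\cdot\E[k(G)]]\le 1/n\to 0$ — shows $k(G)\le n^{(1/2+o(1))\log_2 n}$ with high probability, and the extra factor of $n$ is again absorbed into the $o(1)$ in the exponent.

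The same computation with $\E[i_t(G)]=\binom nt 2^{-\binom t2}$ (independent sets in $G(n,1/2)$ have exactly the same distribution as cliques) gives $i(G)\le n^{(1/2+o(1))\log_2 n}$ whp. Since the union of two high-probability events is high-probability, both bounds hold simultaneously, and therefore
\[
\pi(G)=i(G)k(G)\le n^{(1/2+o(1))\log_2 n}\cdot n^{(1/2+o(1))\log_2 n}=n^{(1+o(1))\log_2 n}
\]
with high probability.

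I do not anticipate a serious obstacle here; the argument is a routine first-moment/Markov computation. The one point requiring a little care is making the $o(1)$ bookkeeping honest: one must check that the polynomial-in-$n$ losses (the factor $n+1$ from summing over $t$, and the factor $n$ from Markov) genuinely contribute only $o(\log n)$ to the exponent of $n$, i.e. are negligible against the main term $(\log_2 n)^2$. It is also worth remarking — to connect with the surrounding discussion — that this proposition shows the lower bound in Theorem~\ref{thm:pi} is tight up to the constant in the exponent, and simultaneously reproves the classical fact that the random graph gives $\sigma(G)\le n^{(1/2+o(1))\log_2 n}$, matching the remark made before the statement of Proposition~\ref{prop:rgtight}.
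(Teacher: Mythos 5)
Your proposal is correct and takes essentially the same route as the paper: a first-moment bound on the number of cliques of each size in $G(n,1/2)$, Markov's inequality, and a union bound over $G$ and $\overline{G}$, with the polynomial losses absorbed into the $o(1)$ in the exponent. The only cosmetic differences are that you bound $\binom{n}{t}$ by $n^t$ and optimize the parabola $t\log_2 n-\binom{t}{2}$ while the paper uses $\binom{n}{t}\le (ne/t)^t$, and your passing remark that $\E[k_t(G)]$ is maximized near $t\approx 2\log_2 n$ should say $t\approx \log_2 n$ (as your own parabola computation, and the paper's $i_*$, show); this slip does not affect the argument.
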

\begin{proof}


Note that $G$ and $\Gbar$ are both distributed as $G(n,1/2)$. The expected number of cliques in $G$ is bounded by 
\[\sum_{i\ge 0} \binom{n}{i}2^{ - \binom{i}{2}} \le \sum_{i\ge 0} \op{ \frac{ne\sqrt{2}}{i\cdot2^{i/2}} }^i  \]
where we used the estimate $\binom{n}{i} \le \op{\frac{ne}{i}}^i$. Let $i_*$ be the (real) value of $i$ which maximizes the summand on the right hand ride. Using calculus, we find that $i_*$ satisfies $n\sqrt{2} = i_*r^{i_*}$ (i.e.,  $i_*\sim \log_2\op{ \frac{n\sqrt 2}{\log_2 n}} \sim \log_2 n$). Thus we have 
\[\E [k(G)] \le n\cdot (e2^{i_*/2})^{i_*} = n^{\op{\frac12 + o(1)} \log_2 n } \]
So using Markov's inequality, we have that w.h.p.\ $k(G) \le n^{\op{\frac12 + o(1)} \log_2 n}$ and so  by the union bound (over $G$ and $\Gbar$), we have that w.h.p.
\[
	\pi(G) \le n^{\op{1 + o(1)}\log_2 n}.
\]
\end{proof}

One can also consider fixed size versions of these problems.  So, using the notation introduced above, we would like to find upper and lower bounds for $\sigma_{t}(G)=i_{t}(G)+k_{t}(G)$ and $\pi_{t}(G)=i_{t}(G)k_{t}(G)$.  Two of these bounds are, in fact, trivial.  Note that if $G$ is a graph on $n$ vertices and $t\geq 2$, then
\[
	\sigma_{t}(G)\leq \binom{n}{t}\qquad\text{and}\qquad 0\leq \pi_{t}(G).
\]
Also, both inequalities are sharp for $G=E_n$ or $K_n$.  (Note that $\sigma_0(G)=2$, $\pi_0(G)=1$, $\sigma_1(G)=2n$, and $\pi_1(G)=n^2$ for all $G$ on $n$ vertices.)

The other two bounds are much more interesting.  The lower bound on $\sigma_{t}(G)$ is a well-studied and difficult problem under the umbrella of Ramsey multiplicity.  The history begins with Goodman~\cite{G59} who gave a lower bound for $t=3$.  Improving a result of Erd\H{o}s~\cite{E62}, Conlon~\cite{C} proved the following.

\begin{theorem}[Conlon 2012]
	If $n$ is large enough and $G$ is a graph on $n$ vertices, then 
	\[
		\sigma_{t}(G)\geq \frac{n^{t}}{C^{(1+o(1))t^2}},
	\]
	where $C\approx 2.18$ is an explicit constant. 
\end{theorem}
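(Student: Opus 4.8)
The plan is to read the statement as a Ramsey-multiplicity lower bound. Color the edges of $K_n$ red when they lie in $G$ and blue otherwise; then $\sigma_t(G)=k_t(G)+i_t(G)$ is precisely the number of monochromatic copies of $K_t$, so the claim becomes: every $2$-edge-coloring of $K_n$ contains at least $n^t/C^{(1+o(1))t^2}$ monochromatic $K_t$'s.

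First I would establish a bound of the right shape, with $C=4$, by supersaturation from Ramsey's theorem. Put $N=R(t,t)$, so that $N\le\binom{2t-2}{t-1}\le 4^t$ by Erd\H{o}s--Szekeres and every $N$-subset of $V(K_n)$ spans a monochromatic $K_t$. Double-counting incidences between monochromatic $K_t$'s and the $N$-subsets that contain them (each such $K_t$ lies in exactly $\binom{n-t}{N-t}$ of them) gives
\[
\sigma_t(G)\;\ge\;\binom{n}{N}\Big/\binom{n-t}{N-t}\;=\;\binom{n}{t}\Big/\binom{N}{t}\;\ge\;\frac{\binom{n}{t}\,t!}{N^{\,t}}\;\ge\;\frac{n^t(1-o(1))}{4^{\,t^2}},
\]
using $\binom{n}{t}\,t!=n(n-1)\cdots(n-t+1)=n^t(1-o(1))$ and absorbing the gap between $\binom{2t-2}{t-1}$ and $4^t$ into the $(1+o(1))$ in the exponent.

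The real content is pushing $C$ from $4$ down to about $2.18$, and this is the hard part. One cannot get there by improving Ramsey's theorem: the averaging above can never beat $C=\liminf_t R(t,t)^{1/t}$, and no bound of the form $R(t,t)\le c^t$ with $c$ anywhere near $2.18$ was known when this result appeared (even now the best bounds leave $c$ well above $2.18$). Instead I would replace the single global appeal to Ramsey's theorem by an iterated embedding that builds the monochromatic $K_t$ in blocks of an optimized size $s=s(t)$ (with $s\to\infty$): at each stage apply the supersaturation bound of the first step at scale $s$ inside the common monochromatic neighborhood of the blocks already chosen, pick the new block so that this neighborhood stays large, and --- when it cannot stay large --- exploit that the coloring must then be locally dense in the opposite color and pivot there. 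Running the block-by-block count (and dividing by the number of block-orderings producing a given $K_t$, at most $t^{O(t)}=2^{o(t^2)}$ and hence harmless in the exponent) replaces the per-block cost $\binom{R(t,t)}{t}^{1/t}$ by a quantity strictly below $4$, and the explicit constant $2.18$ should fall out of optimizing over $s$ jointly with the density threshold used in the pivot. I expect essentially all of the difficulty to sit here --- controlling, in the worst case over colorings, how fast the working set shrinks from block to block, and tuning the recursion so that what comes out is a constant this small; the first step is entirely routine.
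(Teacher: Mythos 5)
There is a genuine gap: you have not proved the stated theorem, and you say as much. Your first step is a correct and clean rendering of the classical supersaturation argument --- the identity $\binom{n}{N}/\binom{n-t}{N-t}=\binom{n}{t}/\binom{N}{t}$ and the Erd\H{o}s--Szekeres bound $R(t,t)\le\binom{2t-2}{t-1}\le 4^t$ do give $\sigma_t(G)\ge n^t(1-o(1))/4^{t^2}$ --- but that is precisely Erd\H{o}s's 1962 result, which the paper cites as the bound Conlon \emph{improved}. The entire content of the theorem being reviewed is the reduction of the base of the exponential from $4$ to the explicit constant $C\approx 2.18$, and for that part you offer only a plan: ``apply supersaturation at scale $s$ inside common neighborhoods, pivot when the neighborhood shrinks, optimize $s$ jointly with a density threshold, and the constant $2.18$ should fall out.'' Nothing in the proposal shows that the per-block cost actually drops below $4$, no recursion is written down or solved, the loss at the pivot step is not quantified, and the claimed constant is not derived. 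As you yourself note, ``essentially all of the difficulty'' sits exactly in the part you have not done, so the proposal cannot be counted as a proof of the statement.

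For comparison: the paper does not prove this theorem either --- it is quoted from Conlon's work --- and Conlon's actual argument is not a routine iteration of the crude supersaturation step. He builds monochromatic cliques vertex by vertex while tracking a carefully chosen weighted count of extensions (in the spirit of, but substantially refining, the Erd\H{o}s--Szekeres embedding), sets up and solves an explicit optimization/recurrence, and the constant $C\approx 2.18$ emerges from that analysis rather than from a Ramsey-number bound. Your intuition that one must bypass $R(t,t)^{1/t}$ and work with an embedding whose per-step cost is analyzed directly is pointed in the right direction, but the distance from that intuition to the explicit constant is the theorem itself. If your goal were only the qualitative bound $\sigma_t(G)\ge n^t/C_0^{(1+o(1))t^2}$ for \emph{some} constant $C_0$, your first paragraph suffices (with $C_0=4$); to justify the statement as written you would need to carry out, in full, the weighted embedding and optimization that you have only gestured at.
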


For an upper bound on $\pi_{t}(G)$, we note that a related problem was studied (independently) by Huang et al.~\cite{HLNPS} and Frankl et al.~\cite{FKSSW}.  Both of these groups were interested in finding 
\[
	\max\setof{\min\set{k_{t}(G),i_{t}(G)}}{\text{$G$ is a graph on $n$ vertices}}.
\]
In fact, Huang et al.~were able to do a bit more as the size of the complete graphs and independent sets could be different.  In Section~\ref{sec:pifix}, we find a lower bound on $\pi_{t}(G)$.  While we are able to use some elements of the techniques of Frankl et al., the proof requires quite a few new ideas.  Our result is as follows.

\begin{theorem}\label{thm:pil}
Let $t\ge 3$ be fixed and define $\mu_t = \frac14\frac{t-2 + \sqrt{t^2 + 4t - 4}}{t - 1}$
and 
\[
	f_t(x) = x^t(1-x)^{t-1}(1 + (t-1)x).
\]
For any graph $G$ on $n$ vertices, 
\[\
	\pi_t(G) = k_t(G)i_t(G) \le \op{\frac{n^t}{t!}}^2f_t(\mu_t) + O(n^{2t-1}).
\]
Moreover, this bound is tight as seen by considering the threshold graph with code $(+)^{(1-\mu_t) n} (-)^{\mu_t n}$ or with code $(-)^{(1-\mu_t)n}(+)^{\mu_t n}$. 
\end{theorem}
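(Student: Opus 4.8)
For the lower bound (tightness), fix $0\le s\le n$ and let $T_s=K_s\sqcup\overline{K_{n-s}}$ be the disjoint union of a clique on $s$ vertices with $n-s$ isolated vertices; this is one of the two threshold graphs named in the statement, the other being its complement $\overline{T_s}$, and $\pi_t$ is invariant under complementation. Every $t$-clique of $T_s$ lies inside the $K_s$, so $k_t(T_s)=\binom st$, and every $t$-element independent set of $T_s$ contains at most one vertex of the $K_s$, so $i_t(T_s)=\binom{n-s}{t}+s\binom{n-s}{t-1}$. Setting $s=cn$ and using $\binom xa=x^a/a!+O(x^{a-1})$ gives
\[
	\pi_t(T_{cn})=\op{\tfrac{n^t}{t!}}^2 c^t(1-c)^{t-1}\bigl(1+(t-1)c\bigr)+O(n^{2t-1})=\op{\tfrac{n^t}{t!}}^2 f_t(c)+O(n^{2t-1}).
\]
A one-variable computation shows $f_t'(c)=0$ on $(0,1)$ exactly when $2(t-1)c^2-(t-2)c-1=0$, whose unique root in $(0,1)$ is $\mu_t$; since $f_t>0$ on $(0,1)$ and vanishes at both endpoints, $f_t(\mu_t)=\max_{[0,1]}f_t$. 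Hence $T_{\lfloor\mu_t n\rfloor}$ (and $\overline{T_{\lfloor\mu_t n\rfloor}}$) has $\pi_t=\op{\tfrac{n^t}{t!}}^2 f_t(\mu_t)+O(n^{2t-1})$, matching the claimed upper bound.

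For the upper bound the plan is in two stages. \emph{Stage 1: reduce to threshold graphs.} I would show, by a compression argument, that $\max_G\pi_t(G)$ is attained, up to an additive $O(n^{2t-1})$, by a threshold graph. The natural move is a vertex-compression: if $G$ is not a threshold graph then two of its vertices $u,v$ have incomparable neighbourhoods (equivalently $G$ has an induced $P_4$, $C_4$ or $2K_2$), and one replaces $u$ or $v$ by an appropriate copy of the other; iterating reduces the number of incomparable pairs to $0$. The content — and the step I expect to be the main obstacle — is to show that one of the two replacements does not decrease the \emph{product} $k_t(G)i_t(G)$: either replacement on its own may decrease one of the two factors, so a compensation between the factors has to be argued, tracking how the cliques and independent sets meeting $\{u,v\}$ change. (This is where the "elements of the techniques of Frankl et al.''~\cite{FKSSW} enter: the analogous non-decrease for the functional $\min\{k_t,i_t\}$ underlies their work, but the product needs extra care and several new ideas.)

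\emph{Stage 2: optimize over threshold graphs.} Encode a threshold graph $T$ by its creation sequence, let $P$ be the set of positions of its dominating vertices, and set $\Gamma(u)=\tfrac1n\,\bigl|P\cap(\lfloor un\rfloor,n]\bigr|$; then $\Gamma$ is decreasing, $1$-Lipschitz, and $\Gamma(1)=0$, and consequently $0\le\Gamma(u)\le1-u$ automatically. From the exact formula $k_t(T)=\binom{|P|}{t}+\sum_{r\notin P}\binom{|P\cap(r,n]|}{t-1}$ and its mirror for $i_t$, a short telescoping computation yields
\[
	k_t(T)=\tfrac{n^t}{(t-1)!}\int_0^1\Gamma(u)^{t-1}\,du+O(n^{t-1}),\qquad i_t(T)=\tfrac{n^t}{(t-1)!}\int_0^1\bigl(1-u-\Gamma(u)\bigr)^{t-1}\,du+O(n^{t-1}).
\]
So it suffices to prove $\bigl(\int_0^1\Gamma^{t-1}\bigr)\bigl(\int_0^1(1-u-\Gamma)^{t-1}\bigr)\le f_t(\mu_t)/t^2$, with equality at the two-block profile $\Gamma(u)=(\mu_t-u)_+$ (for which the left side equals $f_t(\mu_t)/t^2$, matching Stage 1). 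I would do this by first-order variational analysis: on any interval where neither bound $\Gamma'\ge-1$ nor $\Gamma'\le0$ is active, stationarity forces $\Gamma(u)/(1-u-\Gamma(u))$ to be constant, hence $\Gamma(u)=\lambda(1-u)$ there for a single $\lambda\in(0,1)$; such "interior'' profiles give $\bigl(\lambda(1-\lambda)\bigr)^{t-1}/t^2\le 4^{-(t-1)}/t^2$, which one checks is $<f_t(\mu_t)/t^2$ for every $t\ge3$. Thus a maximizer has $\Gamma'\in\{0,-1\}$ a.e., i.e.\ corresponds to a threshold graph that is a concatenation of clique-blocks and independent-blocks, and a case analysis — equivalently, the statement that among graphs with a prescribed number of $t$-cliques the number of $t$-independent sets is (asymptotically) largest for a single-clique graph $K_x\sqcup\overline{K_{n-x}}$ — pins the optimum down to two blocks. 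This last case analysis is elementary but not entirely routine, and is the secondary point requiring work. Combining the two stages gives $\pi_t(G)\le\op{\tfrac{n^t}{t!}}^2 f_t(\mu_t)+O(n^{2t-1})$.
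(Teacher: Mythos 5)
Your tightness computation is correct and matches the paper's: for $T_s=K_s\sqcup\overline{K_{n-s}}$ one gets $\pi_t(T_{cn})=\op{\frac{n^t}{t!}}^2f_t(c)+O(n^{2t-1})$, and $f_t'$ vanishes on $(0,1)$ exactly at $\mu_t$. Your Stage~1 is also the right move, but the obstacle you flag there is not real: the point of the standard compression (Lemma~\ref{lem:comp} and its corollary in the paper) is that the \emph{single} operation $G\mapsto\Gxy$ weakly increases $i_t(G)$ and $i_t(\Gbar)$ \emph{simultaneously} (since $\overline{\Gxy}=\Gbar_{y\to x}$), so the product increases with no compensation argument needed; iterating yields a threshold graph (Lemma~\ref{lem:thres}). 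Your Stage~2 reformulation is sound and is in fact equivalent to the paper's: your profile $\Gamma$ with $\Gamma'\in[-1,0]$ is a reparametrization of the paper's ``border'' paths obtained from the Gale--Ryser/packing picture, and your integral formulas for $k_t$ and $i_t$ agree with the paper's $(p^t+tI_X)(q^t+tI_Y)$ after splitting $\int\Gamma^{t-1}\,du$ along the sets $\{\Gamma'=-1\}$ and $\{\Gamma'=0\}$.

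The genuine gap is that the heart of the upper bound --- showing the optimal profile is a two-block one --- is not actually proved. Your variational step only rules out the \emph{globally} linear profile $\Gamma=\lambda(1-u)$ (by the clean inequality $(\lambda(1-\lambda))^{t-1}/t^2\le 4^{-(t-1)}/t^2<f_t(\mu_t)/t^2$, which does check out for $t\ge3$), but a maximizer could a priori have an interior segment on only part of $[0,1]$ with constrained segments elsewhere, and your argument says nothing about such mixed profiles; moreover, even granting that a maximizer is bang-bang ($\Gamma'\in\{0,-1\}$ a.e.), the reduction from an arbitrary number of blocks to two is exactly the hard combinatorial core, which you explicitly defer as ``not entirely routine.'' This is precisely where the paper does real work: it invokes Lemma~6 of Frankl--Kato--Katona--Tokushige~\cite{FKKT} to reduce any border with $\ge3$ turns to one with fewer turns while strictly increasing both $I_X$ and $I_Y$ (so both factors increase), and then handles the remaining two-turn case by a separate level-curve calculus argument --- fixing one factor $c^t+tbc^{t-1}=K$, computing $dx/dc$ along the curve, and using Descartes' rule of signs to show the only interior critical point cannot be a maximum, forcing $a=0$ or $b=0$. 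Without an argument of comparable substance for these two reductions, your proof of the upper bound is incomplete.
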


Prompted by a question of Pfender \cite{Pf}, we also consider multicolored generalizations of these problems.  To be precise, we color the edges of $K_n$ with, say, $r$ colors, yielding graphs $G_1, G_2, \ldots, G_r$.  We can then attempt to bound
\[
	\sum_{i=1}^r k(G_i)\qquad\text{and}\qquad \prod_{i=1}^r k(G_i).
\]
When $r=2$, these of course reduce to the problems outlined above.  The upper bound on the sum is once again straightforward, generalizing Theorem~\ref{thm:HW}.

\begin{proposition}
Let $r\ge 2$. Suppose $K_n$ is $r$-colored and that $G_i$ is the graph of color $i$ for each $i\in [r]$. Then $\sum_{i=1}^r k(G_i) \le (r-1)(n+1) + 2^n$. Moreover, this bound is achieved when $G_i = K_n$ for some $i\in [r]$. {}
\end{proposition}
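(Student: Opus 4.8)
The plan is to count monochromatic complete subgraphs by their size, exploiting the fact that a set of at least two vertices can be monochromatic in at most one colour. First I would write, for each $i\in[r]$,
\[
	k(G_i) = k_0(G_i) + k_1(G_i) + \sum_{t\ge 2} k_t(G_i) = (n+1) + \bigl|\{\,S\subseteq V(K_n) : |S|\ge 2 \text{ and } S \text{ is a clique in } G_i\,\}\bigr|,
\]
using $k_0(G_i)=1$ and $k_1(G_i)=n$, which hold for every graph on $n$ vertices.

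The key observation is that if $S\subseteq V(K_n)$ with $|S|\ge 2$, then $S$ spans at least one edge of $K_n$, and that edge has a single colour, say $c(S)$; hence if $S$ is a clique in $G_i$ then necessarily $i=c(S)$. Consequently each such set $S$ is counted among the size-$\ge 2$ contributions to $\sum_{i=1}^r k(G_i)$ at most once, so that
\[
	\sum_{i=1}^r \bigl|\{\,S : |S|\ge 2 \text{ and } S \text{ is a clique in } G_i\,\}\bigr| \le \bigl|\{\,S\subseteq V(K_n) : |S|\ge 2\,\}\bigr| = 2^n - n - 1.
\]
Adding the $r$ contributions of $n+1$ coming from the empty set and the singletons then gives
\[
	\sum_{i=1}^r k(G_i) \le r(n+1) + 2^n - n - 1 = (r-1)(n+1) + 2^n,
\]
which is the claimed bound. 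For tightness, colour every edge of $K_n$ with colour $1$; then $G_1=K_n$ contributes $k(G_1)=2^n$, while for each $i\ge 2$ the graph $G_i$ has no edges and so $k(G_i)=n+1$, making the total exactly $(r-1)(n+1)+2^n$.

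There is essentially no serious obstacle here: the argument is a direct double-counting. The only point requiring a little care is to separate out the subsets of size $0$ and $1$, which are ``cliques'' in every $G_i$ and therefore must be handled apart from the at-most-once counting that governs sets of size at least $2$; everything else is immediate. (One could equally phrase the size-$\ge 2$ step as: the collections of monochromatic cliques of size $\ge 2$ in the $G_i$ are pairwise disjoint subfamilies of $\{S\subseteq V(K_n):|S|\ge2\}$.)
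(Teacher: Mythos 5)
Your proof is correct and follows essentially the same argument as the paper: subsets of size at least two can be cliques in at most one colour, while the empty set and singletons count once per colour, giving $r(n+1)+2^n-n-1$, and the monochromatic colouring achieves equality.
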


\begin{proof}
Consider any $A\subseteq [n]$ with $|A|\ge 2.$ Then $A$ can be a clique in at most one of the colors. Thus the total number of monochromatic cliques of size at least 2 is at most $2^n - n - 1$. On the other hand, when $|A|=0$ or when $|A|=1$, then $A$ is considered a clique in every color. Thus we have $\sum_{i=1}^r k(G_i) \le r(n+1) + 2^n - n - 1 = (r-1)(n+1) + 2^n.$  The only way to achieve equality is for every subset of the vertices to yield a monochromatic clique, which can only happen when the entire graph is monochromatic.
\end{proof}

A lower bound on the product can be obtained by modifying the technique of Feige et al.~\cite{FKK}, which is done in Section~\ref{sec:lowerpi}. A lower bound on the sum follows from this as a simple corollary.  Perhaps most interestingly, the upper bound on the product does not follow from the same argument as the two-color case as compression no longer works. One may still suspect that, as in the sum case, the construction where one color is complete and the remaining colors are empty gives the optimal bound, but we show that this is not the case.
We are able to give an upper bound that is sharp up to a constant multiple depending on the number of colors.  This is done in Section~\ref{sec:train}.
Our results for $r$ colors are summarized in the following theorem.

\begin{theorem}\label{thm:multi-main}
Let $r\ge 3$. Suppose the edges of $K_n$ have been $r$-colored and for all $i\in [r]$,  $G_i$ represents the graph of $i$-colored edges. Then there is a constant $C_r$ such that

\[n^{(\frac12 +o(1))\log_r n}\le\prod_{i=1}^r k(G_i) \le C_r n^{\stackrel{\binom r2}{}}2^n.\]
 Furthermore, the upper bound is tight up to the constant $C_r$ and the lower bound is tight up to a multiple of $3\log_3 r$ in the exponent.
\end{theorem}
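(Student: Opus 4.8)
The plan is to handle the four assertions — the lower bound on $\prod_i k(G_i)$, the upper bound, and the two matching constructions — essentially separately. For the lower bound I would imitate the proof of Theorem~\ref{thm:pi}, itself an adaptation of the argument of Feige, Kenyon and Kogan~\cite{FKK}. That argument builds many monochromatic cliques by a greedy process that repeatedly chooses a vertex and restricts to its neighbourhood in a single colour; for two colours some colour carries at least half the incident edges at each step, which is what gives cliques of length $\sim\log_2 n$ and hence $n^{\Theta(\log_2 n)}$ of them. With $r$ colours one instead passes to the largest of the $r$ colour classes at each step, losing a factor $1/r$ rather than $1/2$; replacing $\log_2$ by $\log_r$ throughout the bookkeeping of \cite{FKK}, and keeping the count symmetric over all $r$ colours as in the proof of Theorem~\ref{thm:pi} (so that one bounds the product and not merely $\max_i k(G_i)$), yields $\prod_{i=1}^r k(G_i)\ge n^{(1/2+o(1))\log_r n}$. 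The corollary for $\sum_i k(G_i)$ then follows from AM--GM.

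For the tightness of the lower bound I would take a uniformly random $3$-colouring of $K_n$ in the colours $G_1,G_2,G_3$ and set $G_4=\dots=G_r=E_n$. Exactly as in the proof of Proposition~\ref{prop:rgtight}, but with base $3$, one gets $\E[k(G_i)]=n^{(1/2+o(1))\log_3 n}$ for $i\in\{1,2,3\}$, so by Markov and a union bound $\prod_{i=1}^3 k(G_i)\le n^{(3/2+o(1))\log_3 n}$ with high probability, while $k(G_i)=n+1$ for $i\ge 4$. Since $\log_3 n=(\log_3 r)(\log_r n)$, the exponent $\frac32\log_3 n$ equals $(3\log_3 r)\cdot\frac12\log_r n$, which is the claimed overhead; one also checks that among the ``use $k$ colours and leave the rest empty'' constructions it is $k=3$ that minimises the exponent $\frac{k}{2\log_3 k}\log_3 n$, which is why $3\log_3 r$ rather than something smaller appears.

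For the upper bound, rewrite $\prod_{i=1}^r k(G_i)$ as the number of $r$-tuples $(A_1,\dots,A_r)$ of subsets of $[n]$ with $A_i$ a clique of $G_i$ for every $i$. The structural input is that $\abs{A_i\cap A_j}\le 1$ for $i\ne j$ (two common vertices would force an edge to have two colours), so at most $\binom r2$ vertices lie in two or more of the $A_i$ — one ``transfer'' vertex per pair of colours — and $\sum_i\abs{A_i}\le n+\binom r2$. I would then organise the count by the union $U=\bigcup_i A_i$ and prove that for each fixed $U$ there are only $O_r(\abs U^{\binom r2})$ admissible tuples with that union; summing over the at most $2^n$ choices of $U$ then gives $C_r\,n^{\binom r2}2^n$. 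The per-union bound is what the ``train'' argument of Section~\ref{sec:train} delivers: once the $\le\binom r2$ interface choices (for each pair of colours, a vertex of $U$ or ``none'') have been made, the way the remaining vertices of $U$ split among the $A_i$ is essentially forced, because off the interface the $A_i$ are pairwise disjoint and, since a colour-$i$ and a colour-$j$ clique meet in at most one vertex, the part containing any vertex is determined by the colours of its edges to the vertices already placed. Chaining the cliques along their shared and interface vertices packages this rigidity into one linear object (a ``train'') from which the tuple is recovered with only $O_r(1)$ ambiguity per choice of $U$.

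Finally, for the tightness of the upper bound I would use a recursive construction. Let $G_1=K_S$ on a set $S$ of $\lfloor n/2\rfloor$ vertices with the rest isolated in $G_1$, so $k(G_1)=2^{\abs S}$ and $\overline{G_1}$ is the ``book'' obtained by joining a clique on the remaining set $T$ completely to the independent set $S$; partition $S$ into $r-1$ almost-equal blocks $S_2,\dots,S_r$, colour every edge from $S_i$ to $T$ with colour $i$, and colour the edges inside $T$ by an optimal $(r-1)$-colouring of $K_{\abs T}$. Since a colour-$i$ clique either lies in $T$ or has the form $\{s\}\cup Y$ with $s\in S_i$ and $Y$ a colour-$i$ clique of $T$, one gets $k(G_i)\approx\abs{S_i}\cdot k(G_i[T])$, hence $\prod_{i\ge 2}k(G_i)\approx\big(\prod_{i\ge2}\abs{S_i}\big)\cdot\prod_{i\ge2}k(G_i[T])=\Theta_r(n^{r-1})\cdot M_{r-1}(\abs T)$, where $M_{r-1}$ is the (inductively known) optimum for $r-1$ colours; as $\abs T\approx n/2$ and $r-1+\binom{r-1}2=\binom r2$ this yields $\prod_{i=1}^r k(G_i)=\Theta_r(n^{\binom r2}2^n)$, matching the upper bound up to $C_r$. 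The main obstacle throughout is the per-union bound used for the upper bound: there is no workable compression for $r\ge 3$, and the obvious reduction ``merge two colours and induct'' fails since $k(H_1)k(H_2)/k(H_1\cup H_2)$ can be exponentially large for edge-disjoint $H_1,H_2$ (for instance two vertex-disjoint half-cliques). Worse, the rigidity one wants is genuinely false locally — a monochromatic clique of size $\le 1$ can be moved around freely — so ``$O_r(\abs U^{\binom r2})$ tuples per union'' has to be proved by a careful global propagation argument, which is the technical heart of the proof.
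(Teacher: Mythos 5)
Your treatment of the lower bound (greedy nested monochromatic neighbourhoods, losing a factor $1/r$ per step, bookkeeping as in Feige et al.) and of its tightness (random $3$-colouring plus $r-3$ empty colours, with the observation that $k/\log k$ is minimised at $k=3$) matches the paper's Section~\ref{sec:lowerpi} essentially verbatim. Your construction for the tightness of the upper bound is genuinely different from the paper's: the paper uses a tournament on $[r]$, splits $V$ into $\binom r2$ blocks indexed by tournament edges, and gets $\prod_i k(G_i)\ge (n/\binom r2)^{\binom r2}2^n$ in one shot, whereas you recurse ($G_1$ complete on half the vertices, the other half handled by an optimal $(r-1)$-colouring, joined by $r-1$ complete bipartite colour classes). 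Your recursion does check out --- $k(G_1)=\Theta(2^{n/2})$, $k(G_i)=\Theta(|S_i|\,k(G_i[T]))$ for $i\ge 2$, and $r-1+\binom{r-1}{2}=\binom r2$ --- so this is a valid alternative, arguably more transparent about where each factor of $n$ comes from, though it needs the base case $r=2$ spelled out.

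The genuine gap is in the upper bound, precisely where you flag ``the technical heart.'' Your reduction to a per-union count and the summation $\sum_k\binom nk k^{\binom r2}\le n^{\binom r2}2^n$ are exactly the paper's Corollary~\ref{cor:multiprod}, but the per-union bound $O_r(|U|^{\binom r2})$ is not established by your sketch, and the accounting behind it is off. You attribute the exponent $\binom r2$ to the at most $\binom r2$ vertices lying in two or more of the $A_i$ (one per unordered pair, since $|A_i\cap A_j|\le 1$), and claim the rest is ``essentially forced.'' It is not: comparing an arbitrary admissible tuple $(S_1,\ldots,S_r)$ to a fixed disjoint reference tuple $(T_1,\ldots,T_r)$ with the same union, the freedom lies in reassigning, for each \emph{ordered} pair $(j,i)$, at most one vertex of $T_j$ into $S_i$ --- these reassignments need not create any doubly-covered vertex, so bounding the ``interface'' does not control them, and the naive count is $(n+1)^{r(r-1)}$, twice the desired exponent. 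The paper's Theorem~\ref{thm:tuples} closes exactly this factor-of-two gap: it calls $v$ \emph{viable} for $G_k$ if $v$ is $G_k$-adjacent to all but at most $r-1$ vertices of $T_k$, builds the digraph $H$ on colours with an edge $j\to k$ when at least $2r-1$ vertices of $T_j$ are viable for $G_k$, and shows $H$ has no $2$-cycles (a $2$-cycle would force both $G_j$ and $G_k$ to contain at least $r(2r-1)$ of the $(2r-1)^2$ edges between witness sets $U_j$ and $U_k$, contradicting edge-disjointness). Hence only $e(H)\le\binom r2$ of the ordered pairs admit $\Theta(n)$ choices, the rest admitting $O(r)$. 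Your ``propagation/train'' heuristic does not supply this asymmetric counting argument, and without it the exponent you obtain is $r(r-1)$ rather than $\binom r2$.
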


\section{Upper bound on $\pi(G)$}\label{sec:upperpi}

One tool that helps us in some cases is that of compression.  Compression has been used many times to prove various results in extremal graph theory.  Given two vertices $x$ and $y$ in a graph $G$, they partition the rest of the vertices into four sets:
\begin{align*}
	N_x(G)&=\setof{v\in V(G)\wo \set{x,y}}{v\adj x, v\nadj y},\\
	N_y(G)&=\setof{v\in V(G)\wo \set{x,y}}{v\nadj x, v\adj y},\\
	N_{xy}(G)&=\setof{v\in V(G)\wo \set{x,y}}{v\adj x, v\adj y},\text{ and}\\
	O_{xy}(G)&=\setof{v\in V(G)\wo \set{x,y}}{v\nadj x, v\nadj y}.
\end{align*}
We then define the \emph{compression of $G$ from $x$ to $y$}, denoted $\Gxy$, to be the graph with vertex set $V(G)$ and edge set consisting of all edges of $G$ except those in $N_x$ with the addition of edges between $y$ and $N_x$.  In other words, $N_x(\Gxy)=\emptyset$, $N_y(\Gxy)=N_x(G)\cup N_y(G)$, $N_{xy}(\Gxy)=N_{xy}(G)$, and $O_{xy}(\Gxy)=O_{xy}(G)$.  The following lemma is well-known (see, e.g., \cite{CR}), but we include a sketch of a proof for completeness.

\begin{lemma}\label{lem:comp}
	If $G$ is a graph with vertices $x$ and $y$, then $i(G)\leq i(\Gxy)$ and $i(\Gbar)\leq i(\Gbar_{x\to y})$.
\end{lemma}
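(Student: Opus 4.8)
The plan is to verify the two inequalities $i(G) \le i(\Gxy)$ and $i(\Gbar) \le i(\Gbar_{x\to y})$ by exhibiting, for each, an injection from the independent sets of the original graph into those of the compressed graph. I would first record the structural observation that compression only changes adjacencies between $y$ and the set $N_x(G)$: edges among $N_x, N_y, N_{xy}, O_{xy}$ and $x$ are untouched, the vertex $x$ becomes adjacent only to $N_{xy}(G)$ (it loses its neighbors in $N_x(G)$), and $y$ gains all of $N_x(G)$ as neighbors. So $x$ becomes a ``sub-neighbor'' copy of $y$ in $\Gxy$ in the sense that $N_{\Gxy}(x) \subseteq N_{\Gxy}(y) \cup \{\text{possibly } y\}$, a standard setup for compression arguments.

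For the first inequality I would partition the independent sets $I$ of $G$ according to their intersection with $\{x,y\}$. Sets with $x \notin I$ are still independent in $\Gxy$ (no edges were added except incident to $y$, and if $y\in I$ then $I\setminus\{x\}=I$ avoids $N_x(G)$ only if... ) — here care is needed, so the cleaner route is: if $x\notin I$, then $I$ uses only unchanged edges together with possibly new edges at $y$; but any new edge at $y$ goes to $N_x(G)$, and I would argue that if $y\in I$ is independent in $G$ then $I\cap N_x(G)$ could be nonempty, so $I$ need not survive. The standard fix is to define the injection by swapping roles of $x$ and $y$ when needed: map $I$ to itself if $I$ is independent in $\Gxy$, and otherwise map $I$ to $(I\setminus\{y\})\cup\{x\}$. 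One checks this is well-defined (the image is independent in $\Gxy$ because $x$ has fewer neighbors there) and injective (the two cases have disjoint images, distinguished by whether $y\in I$ combined with independence). The inequality for $\Gbar$ follows by the analogous argument applied to the complement, noting that $\overline{\Gxy}$ and $\overline{G}_{x\to y}$ are related by the same kind of local modification — concretely, complementation swaps the sets $N_x \leftrightarrow O_{xy}$-type roles, and one checks $\overline{G}_{x\to y}$ is obtained from $\overline{G}$ by a compression (from $x$ to $y$, or from $y$ to $x$, after tracking the adjacency bookkeeping), so the first part applies verbatim.

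The main obstacle I expect is precisely this bookkeeping: making sure the injection is genuinely well-defined and injective in every case of $I\cap\{x,y\}$, and, for the second inequality, correctly identifying that $\overline{G}_{x\to y}$ really is (an isomorphic copy of) a compression of $\overline{G}$ so that no separate argument is needed. None of this is deep, but it is the kind of case analysis that is easy to get subtly wrong; since the lemma is stated as well-known with only a sketch promised, I would keep the write-up to the injection definition plus a one-line check of each property, and handle the complement case by the symmetry remark rather than repeating the argument.
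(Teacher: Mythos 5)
Your plan is correct and is essentially the paper's proof: your swap map $I\mapsto(I\setminus\{y\})\cup\{x\}$ on the independent sets that fail in $\Gxy$ is exactly the paper's injection $I\mapsto I\symd\set{x,y}$ (applied only to $\I(G)\wo\I(\Gxy)$, identity on the rest), and the complement inequality is handled the same way, via the observation that complementation turns the compression into a compression (the paper records this as $\overline{\Gxy}=\Gbar_{y\to x}$). Just be sure the well-definedness check is phrased via $N_{\Gxy}(x)\setminus\{y\}=N_{xy}(G)\subseteq N_G(y)$ (the new neighborhood of $x$ sits inside the old neighborhood of $y$), rather than merely ``$x$ has fewer neighbors,'' and the one-line verification goes through.
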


\begin{proof}
	We show that there is an injection from $\I(G)\wo \I(\Gxy)$ to $\I(\Gxy)\wo \I(G)$.  Note that if $I\in \I(G)\wo \I(\Gxy)$, then it must contain an edge of $\Gxy$ that does not exist in $G$.  Thus, $y\in I$ and there is some $z\in N_x(G)$ such that $z\in I$.  But then $x\not\in I$ since $xz\in E(G)$ and so $I\symd \set{x,y}\in \I(\Gxy)\wo \I(G)$.  It is straightforward to check that the map $I\mapsto I\symd \set{x,y}$ is an injection.  Thus, we have that $i(G)\leq i(\Gxy)$.
	
	For the other inequality, note that compression in $G$ yields compression in $\Gbar$ as well.  In particular, $\overline{\Gxy}=\Gbar_{y\to x}$.  Thus, we also have that $i(\Gbar)\leq i(\Gbar_{x\to y})$.
\end{proof}

In fact, the same proof yields that the number of independent sets of fixed size does not decrease under compression.

\begin{corollary}
	If $G$ is a graph with vertices $x$ and $y$, then $i_{t}(G)\leq i_{t}(\Gxy)$ and $i_{t}(\Gbar)\leq i_{t}(\Gbar_{x\to y})$.
\end{corollary}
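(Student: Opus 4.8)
The plan is simply to observe that the injection built in the proof of Lemma~\ref{lem:comp} is \emph{cardinality-preserving}, so it restricts to an injection on independent sets of each fixed size $t$. Recall that that proof produces an injection from $\I(G)\wo\I(\Gxy)$ into $\I(\Gxy)\wo\I(G)$ via the map $I\mapsto I\symd\set{x,y}$, and that for any $I\in\I(G)\wo\I(\Gxy)$ one necessarily has $y\in I$ and $x\notin I$ (because $I$ must contain an edge $yz$ of $\Gxy$ absent from $G$, forcing $z\in N_x(G)\cap I$, and then $x\notin I$ since $xz\in E(G)$). Consequently $I\symd\set{x,y}=\op{I\wo\set{y}}\disj\set{x}$, which has the same number of elements as $I$.

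From this the first inequality follows in one line: the map above sends a size-$t$ member of $\I(G)\wo\I(\Gxy)$ to a size-$t$ member of $\I(\Gxy)\wo\I(G)$, so it restricts to an injection $\setof{I\in\I(G)\wo\I(\Gxy)}{\abs{I}=t}\hookrightarrow\setof{I\in\I(\Gxy)\wo\I(G)}{\abs{I}=t}$; adding to both sides the count of size-$t$ sets lying in $\I(G)\cap\I(\Gxy)$ yields $i_t(G)\le i_t(\Gxy)$. For the complement statement I would use, exactly as in Lemma~\ref{lem:comp}, that compression in $G$ induces compression in $\Gbar$ with $\overline{\Gxy}=\Gbar_{y\to x}$, and apply the inequality just proved to $\Gbar$ (with the roles of $x$ and $y$ interchanged) to get $i_t(\Gbar)\le i_t(\Gbar_{x\to y})$.

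There is essentially no obstacle here; the only point that needs checking is the size-preservation claim, and that is immediate from the structural facts $y\in I$, $x\notin I$ already extracted in the proof of Lemma~\ref{lem:comp}. Everything else is bookkeeping, which is why the result is stated as a corollary.
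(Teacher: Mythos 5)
Your proposal is correct and is essentially the paper's own argument: the paper's proof of the corollary simply notes that the map $I\mapsto I\symd\set{x,y}$ from the proof of Lemma~\ref{lem:comp} preserves cardinality, so it restricts to an injection on size-$t$ independent sets, and you have spelled out exactly this (plus the same complementation remark $\overline{\Gxy}=\Gbar_{y\to x}$ for the second inequality). The only nitpick is that the second inequality as stated is just the first inequality applied to the graph $\Gbar$ with the compression taken from $x$ to $y$, so no swapping of roles is needed; this is a harmless relabeling and does not affect correctness.
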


\begin{proof}
	Note that in the proof of Lemma~\ref{lem:comp}, the map $I\mapsto I\symd\set{x,y}$ is in fact a map between sets of the same size.  The result follows.
\end{proof}

Note that compressions force (closed) neighborhoods to become nested, and so repeated compressions yield a graph in which the closed neighborhoods are ordered by containment.  Such graphs are known as \emph{threshold graphs}.

\begin{definition}
	A graph is a \emph{threshold graph} if it can be formed iteratively from a single vertex by, at each step, adding either an isolated vertex or a dominating vertex.
\end{definition}

A threshold graph on $n$ vertices, by definition, can be represented by a binary code of length $n-1$ (the initial vertex does not need to be encoded) where $+$ corresponds to a dominating vertex and $-$ corresponds to an isolate.  We call this the \emph{code} of a threshold graph and write it from right to left.

\begin{lemma}\label{lem:thres}
For any graph $G$ on $n$ vertices, there is a threshold graph $T$ on $n$ vertices such that  
\[\pi(G) \le \pi(T) \quad\text{and}\quad \pi_t(G) \le \pi_t(T).\]
\end{lemma}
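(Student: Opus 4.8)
The plan is to prove the lemma by iterated compression, so the first thing to establish is the one-step statement: for any vertices $x,y$ of $G$,
\[
\pi(G)\le\pi(\Gxy)\qquad\text{and}\qquad \pi_t(G)\le\pi_t(\Gxy).
\]
Lemma~\ref{lem:comp} directly gives $i(G)\le i(\Gxy)$. For the other factor we must relate $i(\overline{\Gxy})$ to $i(\Gbar)$, and here the key point is to use the identity $\overline{\Gxy}=\Gbar_{y\to x}$ recorded in the proof of Lemma~\ref{lem:comp}: applying Lemma~\ref{lem:comp} to the graph $\Gbar$ with the two vertices taken in the order $y$ then $x$ yields $i(\Gbar)\le i(\Gbar_{y\to x})=i(\overline{\Gxy})$. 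Multiplying the two inequalities gives $\pi(G)=i(G)\,i(\Gbar)\le i(\Gxy)\,i(\overline{\Gxy})=\pi(\Gxy)$. Running exactly the same two-invocations argument with the fixed-size analogue of Lemma~\ref{lem:comp} (the Corollary above) in place of Lemma~\ref{lem:comp} gives $\pi_t(G)\le\pi_t(\Gxy)$.

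Next I would show that some finite sequence of compressions turns $G$ into a threshold graph $T$; applying the previous paragraph once per step then yields $\pi(G)\le\pi(T)$ and $\pi_t(G)\le\pi_t(T)$, which is the lemma. Each compression preserves the number of edges (it deletes the $|N_x(G)|$ edges from $x$ to $N_x(G)$ and adds the $|N_x(G)|$ edges from $y$ to $N_x(G)$), every vertex other than $x,y$ keeps its degree, $x$ loses $|N_x(G)|$ from its degree, and $y$ gains $|N_x(G)|$. So if at each step we pick a pair $x,y$ with $d_G(x)\le d_G(y)$ and $N_x(G)\neq\emptyset$ and compress $x\to y$, then the potential $\Phi(G):=\sum_{v\in V(G)}\binom{d_G(v)}{2}$ strictly increases, by strict convexity of $\binom{\cdot}{2}$ — the move transfers $|N_x(G)|\ge 1$ units of degree from the smaller endpoint to the larger one. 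Since $\Phi$ is a nonnegative integer bounded by $n\binom{n}{2}$, only finitely many such compressions are possible, and the process stops at a graph $T$ in which, for every pair $x,y$ with $d_T(x)\le d_T(y)$, we have $N_x(T)=\emptyset$, i.e.\ $N_T(x)\setminus\{y\}\subseteq N_T(y)\setminus\{x\}$. Ordering the vertices of $T$ by degree, this says the vicinal preorder of $T$ is total, which is the standard characterization of threshold graphs (equivalently, as already observed before the statement, iterated compression makes the closed neighborhoods nested; see also~\cite{CR}).

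I do not expect a genuine obstacle here. The only two places that need care are the treatment of the complement in the one-step inequality — which is why the argument invokes Lemma~\ref{lem:comp} twice, once for $G$ and once for $\Gbar$ via $\overline{\Gxy}=\Gbar_{y\to x}$, rather than trying to compare $i(\overline{\Gxy})$ with $i(\Gbar_{x\to y})$ — and the choice of a monovariant (here $\Phi$) certifying that the compression process terminates at a threshold graph.
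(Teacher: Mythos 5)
Your proof is correct and follows essentially the same route the paper intends (the paper states Lemma~\ref{lem:thres} without proof, relying on Lemma~\ref{lem:comp}, its corollary, and the remark that repeated compressions produce a threshold graph): you apply compression to $G$ and, via $\overline{\Gxy}=\Gbar_{y\to x}$, to $\Gbar$, so neither factor of $\pi$ or $\pi_t$ decreases, and then iterate. Your only addition is making the termination explicit with the monovariant $\sum_v\binom{d(v)}{2}$ and the nested-neighborhood characterization of threshold graphs, which correctly fills in a detail the paper leaves implicit.
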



\begin{theorem}
	If $G$ is a graph on $n$ vertices, then
	\[
		\pi(G)\leq (n+1)2^n=\pi(K_n).
	\]
\end{theorem}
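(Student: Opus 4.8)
By Lemma~\ref{lem:thres}, it suffices to prove the bound for threshold graphs, so I would fix a threshold graph $T$ on $n$ vertices and show $\pi(T) = i(T)k(T) \le (n+1)2^n$, with the additional goal of identifying when equality holds. The natural strategy is induction on $n$ via the recursive structure of threshold graphs: $T$ is obtained from a threshold graph $T'$ on $n-1$ vertices by adding either an isolated vertex or a dominating vertex. I would set up the induction so that I track both $i(T)$ and $k(T)$ simultaneously, since a single step affects them in opposite ways.

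**Key steps.** First, record how the two quantities transform under a single construction step. If $T = T' + v$ with $v$ isolated, then every independent set of $T'$ extends (with or without $v$) so $i(T) = 2\,i(T')$, while the only new cliques containing $v$ are $\{v\}$ itself, so $k(T) = k(T') + 1$. Dually, if $v$ is dominating, then $k(T) = 2\,k(T')$ and $i(T) = i(T') + 1$. Second, feed these into the product: in the isolated case $\pi(T) = 2\,i(T')\bigl(k(T')+1\bigr) = 2\pi(T') + 2\,i(T')$, and symmetrically $\pi(T) = 2\pi(T') + 2\,k(T')$ in the dominating case; in either case $\pi(T) = 2\pi(T') + 2\min\{i(T'),k(T')\}$ if we choose the step wisely, but for an arbitrary $T$ we just get $\pi(T) \le 2\pi(T') + 2\max\{i(T'),k(T')\}$. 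Third, I need an auxiliary bound on $\max\{i(T'),k(T')\}$ — but this is immediate since for \emph{any} graph $H$ on $m$ vertices both $i(H) \le 2^m$ and $k(H) \le 2^m$ trivially (each is a count of subsets). Thus $\pi(T) \le 2\pi(T') + 2\cdot 2^{n-1} = 2\pi(T') + 2^n$. Fourth, run the induction: with base case $n=1$ where $\pi = i\cdot k = 1\cdot 1$... wait, $i(K_1)=2, k(K_1)=2$ so $\pi=4 = (1+1)2^1$, the claim holds with equality. Then $\pi(T) \le 2\pi(T') + 2^n \le 2\cdot n\cdot 2^{n-1} + 2^n = n2^n + 2^n = (n+1)2^n$, closing the induction.

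**Main obstacle.** The inductive inequality $\pi(T) \le 2\pi(T') + 2^n$ is slightly lossy — I used $\max\{i(T'),k(T')\} \le 2^{n-1}$, which is only tight for $E_{n-1}$ or $K_{n-1}$, yet the final bound still comes out exactly right, so the looseness must be illusory at the extremal configurations; checking that the equality analysis is consistent (i.e., that equality throughout forces $T' = E_{n-1}$ or $K_{n-1}$ and then the last step to be the matching one, yielding $T = E_n$ or $K_n$) is the part that needs genuine care. An alternative, cleaner route avoiding the lossy step entirely: prove by induction the \emph{pair} of statements $i(T) + k(T) \le 2^n + n + 1$ (which is Theorem~\ref{thm:HW}, already available, with equality iff $T \in \{E_n, K_n\}$) together with the observation that for threshold graphs $\min\{i(T),k(T)\}$ stays small; then $\pi(T) = i(T)k(T)$ is maximized, given a bounded sum and one factor forced to be at least... — but this AM–GM-flavored approach actually pushes the product \emph{down}, so it does not directly help. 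The honest expectation is that the straightforward induction above works, and the real work is the equality characterization rather than the bound itself.
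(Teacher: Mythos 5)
Your proof is correct and follows essentially the same route as the paper: reduce to threshold graphs via Lemma~\ref{lem:thres}, peel off the last-added vertex, use the recursions $\pi(T)=2\pi(T')+2i(T')$ or $2\pi(T')+2k(T')$, and close the induction with the trivial bound $\max\{i(T'),k(T')\}\le 2^{n-1}$. The only cosmetic difference is that the paper invokes $\pi(T)=\pi(\overline{T})$ to assume the last vertex is dominating, whereas you treat both cases directly; the extra remarks about equality are not needed for the stated inequality.
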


\begin{proof}
	We prove the statement by induction on $n$ and note that the statement is trivial for $n=1$.  Thus, assume the statement is true for $n$ and let $G$ be a graph on $n+1$ vertices.  By Lemma~\ref{lem:thres}, we know that there is a threshold graph $T$ on $n+1$ vertices with $\pi(G)\leq \pi(T)$.  Note that if $T$ is a threshold graph, then $\Tbar$ is also a threshold graph (with complementary code to that of $T$).  Further, we know that $\pi(T)=\pi(\Tbar)$ and so may assume that the final vertex, say $x$, added to $T$ was a dominating vertex.  Let $T'=T-x$.  Then, since $x$ is a dominating vertex, it can be added (or not) to any clique of $T'$ to get a clique in $T$.  Also, the vertex $x$ is only in one independent set in $T$, namely $\set{x}$.  So, we have $\pi(T)=i(T)k(T)=(i(T')+1)2k(T')$.  So,
	\begin{align*}
		\pi(T)&=(i(T')+1)2k(T')\\
		&=2\pi(T')+2k(T')\\
		&\leq 2[(n+1)2^n]+2\cdot 2^n\\
		&=(n+2)2^{n+1},
	\end{align*}
	where the inequality follows from the induction hypothesis and the fact that $k(T')\leq 2^n$.  
\end{proof}

\section{Lower bound for $\pi(G)$}\label{sec:lowerpi}

In this section, we prove the lower bounds in Theorem~\ref{thm:pi} and Theorem~\ref{thm:multi-main}.

\begin{theorem}\label{thm:r-col-lb}
Let $r\ge2$ and suppose that $r^m \le n < r^{m+1}$.  Suppose $K_n$ is $r$-colored and that $G_i$ is the graph of color $i$ for each $i\in [r]$. Then \[\prod_{i=1}^r k(G_i) \ge r^{\frac{1}{2}m^2 - O(m\log m)} = n^{(\frac12 + o(1))\log_r n}\]
\end{theorem}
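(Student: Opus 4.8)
Since $k(G_i)$ can only decrease when vertices are deleted, restricting an $r$-colouring of $K_n$ to any $r^m$-element subset does not increase $\prod_i k(G_i)$; hence it suffices to prove the bound for $n=r^m$, and I would do so by induction on $m$, writing $P(m)$ for the minimum of $\prod_{i=1}^r k(G_i)$ over all $r$-colourings of $K_{r^m}$. The whole theorem reduces to a single recursion of the form
\[
	P(m)\ \ge\ \frac{r^{\,m}}{q(m)}\,P(m-1)
\]
for a fixed polynomial $q$: iterating it from $m$ down to $0$ gives $\log_r P(m)\ge\sum_{j=1}^{m}\bigl(j-O(\log j)\bigr)=\tfrac12 m^2-O(m\log m)$, which is exactly the claimed $r^{\frac12 m^2-O(m\log m)}$. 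So everything comes down to this recursion, and it is here that the argument of Feige, Kenyon and Kogan gets adapted.

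The engine is a greedy ``chain'' construction. Fixing an $r$-colouring of $K_N$ with $N=r^m$, build vertices $v_1,v_2,\dots$ and nested sets $V\supseteq A_1\supseteq A_2\supseteq\cdots$: given $A_{j-1}$, choose $v_j\in A_{j-1}$ and a colour $c_j$ in which $v_j$ has at least $|A_{j-1}|/r-1$ neighbours inside $A_{j-1}$, and set $A_j=A_{j-1}\cap N_{c_j}(v_j)$, so $|A_j|\gtrsim r^{m-j}$. For a fixed colour $\gamma$ and any $j$ with $c_j=\gamma$, the vertex $v_j$ is $\gamma$-complete to $A_j$ while no earlier $v_{j'}$ is $\gamma$-adjacent to $v_j$; classifying the $\gamma$-cliques of $K_N$ by the least index they use among $\{v_j:c_j=\gamma\}$ produces disjoint families and the key inequality
\[
	k(G_\gamma)\ \ge\ \sum_{j:\,c_j=\gamma} k\bigl(G_\gamma[A_j]\bigr).
\]
This is the device by which monochromatic cliques get counted simultaneously at every scale of the chain, and it is the part imported from Feige et al.; a one-scale version (pass to $A_1$, apply induction there, gain only a factor $2$) yields merely a polynomial bound, which is why the multi-scale count is essential.

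To finish, one forms $\prod_\gamma k(G_\gamma)\ge\prod_\gamma\sum_{j:c_j=\gamma}k\bigl(G_\gamma[A_j]\bigr)$ and converts it into $r^{m}q(m)^{-1}P(m-1)$. I would split into cases. If some colour is so sparse that it can be made never (or rarely) a majority colour, delete it, pass to a clique on $\ge n/2$ vertices avoiding it, apply the theorem with $r-1$ colours — using $\log_{r-1}n\ge\log_r n$ — and recover an extra factor $\ge n$ from the fact that $k(G_i)\ge n+1$ always; this comfortably beats the target. Otherwise all colours appear as majority colours within the first $J=O(r\log r)$ steps of the chain (if this fails because every vertex of some $A_{j-1}$ has the same majority colour $\gamma$, then $G_\gamma[A_{j-1}]$ has minimum degree $\Omega(|A_{j-1}|)$ and already $\Omega(|A_{j-1}|^2)$ cliques, an even larger gain, handled separately); one then peels off, for the earliest-appearing colour, its leading term $k(G_\gamma[A_{\min S_\gamma}])$ of size $\Omega(r^{m-1})$, and bounds the remaining factors below by $P(m-1)$ by applying the inductive hypothesis on a common set $A_J$ of size $\ge r^{m-J}$, the $O(r\log r)$ overhead and the polynomial losses being absorbed into the $O(m\log m)$ slack.

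The genuinely delicate point — and the step I expect to be the main obstacle — is precisely this last conversion: squeezing a multiplicative gain of order $r^{m}$ (not merely a constant, or $2^{O(r)}$) out of $\prod_\gamma\sum_j k(G_\gamma[A_j])$ while still leaving a clean common set of size $\ge r^{m-1}/\mathrm{poly}(m)$ on which to recurse, and doing this uniformly over all $r$ colours at once. It requires exploiting that the early sets $A_j$ are exponentially larger than the later ones and that the number of available clique-extensions at scale $j$ is itself of order $|A_j|$, and it requires the dichotomy above to be organised so that the two cases genuinely cover all colourings; this balancing act is where the Feige--Kenyon--Kogan technique must really be modified, and I expect it to constitute the bulk of the proof.
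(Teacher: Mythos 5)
There is a genuine gap, and you have in fact flagged it yourself: the entire theorem is made to rest on the recursion $P(m)\ge \frac{r^m}{q(m)}P(m-1)$, and that recursion is never established. The chain construction and the inequality $k(G_\gamma)\ge\sum_{j:c_j=\gamma}k(G_\gamma[A_j])$ are sound, but the case analysis you sketch for converting it into the recursion does not work as stated. First, in the ``sparse colour'' case you propose to ``pass to a clique on $\ge n/2$ vertices avoiding'' a colour that is never (or rarely) a majority colour along the chain; a colour can fail to be the majority colour at every scale while still appearing on a positive fraction of all pairs (e.g.\ having density just below $1/r$ everywhere), and then no set of $n/2$ vertices avoids it --- avoiding a colour on a linear-sized set is a Ramsey-type demand, not a consequence of non-majority. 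Second, the complementary case, that otherwise every colour becomes a majority colour within the first $J=O(r\log r)$ steps, is not justified and does not follow from the failure of the first case; the dichotomy does not cover all colourings. Third, even granting the dichotomy, the peeling step gains a factor of order $|A_{j_0}|\approx r^{m-1}$ for a single colour while the common set on which you recurse is $A_J$ of size only $r^{m-J}$, so what you actually obtain is $P(m)\gtrsim r^{m-1}P(m-J)$, not $r^{m}q(m)^{-1}P(m-1)$ (you write $P(m-1)$ where only $P(m-J)$ is available, and the peeled colour's factor is also missing from the restricted product). Iterating $P(m)\gtrsim r^{m}P(m-J)$ gives exponent about $m^2/(2J)$, which falls short of $\tfrac12 m^2$ by a factor depending on $r$. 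So the ``delicate last conversion'' you defer is not a technicality; it is the theorem.

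The paper avoids the recursion entirely and the proof is much shorter. It counts, for a fixed colouring, the \emph{good sequences} $(v_1,\dots,v_q)$ in which each $v_i$ has some colour $C_i$ with $\{v_{i+1},\dots,v_q\}\subseteq N_{C_i}(v_i)$. Greedily following majority-colour neighbourhoods (your chain, with $a_{k+1}=\lceil (a_k-1)/r\rceil\ge r^{m-k}$) shows there are at least $\prod_{i=1}^q a_i$ such sequences; on the other hand, the vertices of a good sequence sharing a common colour $C_i=k$ form a clique in $G_k$, so each good sequence determines an $r$-tuple of monochromatic cliques, and a given vertex set underlies at most $q!$ good sequences, whence $\prod_i a_i\le q!\prod_{i=1}^r k(G_i)$. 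Taking $q=m$ and dividing by $m!$ gives $r^{\frac12 m^2-O(m\log m)}$ directly. In other words, the gain of $a_j$ at \emph{every} scale is harvested simultaneously by double counting, with all inter-scale interaction absorbed into the single $m!$ factor --- exactly the step your per-level induction cannot reproduce. If you want to salvage your framework, the right move is to abandon the recursion and organize your chain into this two-sided count.
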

\begin{proof}

As noted in the introduction, we use a modification of a technique of Feige et al.~\cite{FKK}. 
Let $G$ be an $r$-colored $K_n$. We say that a sequence of distinct vertices $(v_1, \ldots, v_q)$ is \emph{good} if for all $i=1,\ldots q-1$, there exists $C_i\in[r]$ such that $\set{v_{i+1},\ldots, v_q} \of N_{C_i}(v_i)$. Here, $N_{C}(v)$ refers to the $C$-colored neighborhood of $v$, i.e. $\set{w\in G\,:\, vw \textrm{ is color } C}$.

Let $\mathcal{X}(G,q)$ be the set of all good sequences of $G$ of length $q$.
Define the recursive sequence $a_1 = n$ and  $a_{k+1} = \up{\frac{a_k  -1}{r}}$ for all $k\ge 1$. Note that since $n\ge r^m$, we have that $a_k \ge r^{m-k+1}$ for all $1\le k\le m$. Indeed, by induction, we have that  $a_{k+1} = \up{\frac{a_k -1}{r}} \ge \up{\frac{r^{m-k+1}-1}{r}} = r^{m-k}
$.

We now claim that 
\begin{equation}\label{eq:chitwosides}
\prod_{i=1}^q a_i \le \mathcal{X}(G,q)\le q!\cdot \prod_{i=1}^r k(G_i) 
\end{equation}
To see the lower bound, we note that there are $a_1 = n$ choices for the first vertex, $v_1$, in a good sequence. Now $v_1$ has at least $a_2$ neighbors of some color $C_1\in[r]$ and so there are at least $a_2$ choices for $v_2$. We restrict $G$ to $N_{C_1}(v_1)$ and note that $v_2$ has at least $a_3$ neighbors in $N_{C_1}(v_1)$ of some color $C_2\in [r]$. Continuing in this manner (restricting to the nested monochromatic neighborhoods at each step) we have the lower bound.

To see the upper bound, notice that all the vertices $v_i$ in a good sequence with $C_i = k$ form a clique in color $k$. So each good sequence corresponds to a tuple of monochromatic cliques $(A_1,\ldots,  A_r)$. A set of vertices $\set{v_1,\ldots, v_q}$ can appear in at most $q!$ many good sequences.
Rearranging \eqref{eq:chitwosides} and setting $q=m$, we have
\[\prod_{i=1}^r k(G_i) \ge \frac{1}{m!}\prod_{i=1}^{m}a_i\ge \frac{1}{m!}\prod_{i=1}^{m}r^{m-i+1}\ge r^{\frac12 m^2 - O(m\log m)}.\]
\end{proof}

The tightness mentioned in Theorem \ref{thm:multi-main} 
follows from considering a randomly $3$-colored $K_n$. A nearly identical proof to that of 
 Proposition \ref{prop:rgtight} 
 shows that in this case, with high probability, \[\prod_{i=1}^3 k(G_i) \le n^{(\frac32 +o(1))\log_3 n} =  n^{(3\log_3r)\cdot(\frac12+o(1)) \log_r n}.\]
 It is interesting to note that in this case, taking a randomly $r$-colored $K_n$ would lead to an exponent of essentially $\frac r2 \log_r n$ which is larger than the given exponent for $r=2$ and all $r>3$.

We have the following Corollary which provides a lower bound for $\sum_{i=1}^r k(G_i)$ for $r\ge 2$. The result follows simply by applying the AM-GM inequality $\sum_{i=1}^r k(G_i) \ge r\cdot \op{\prod_{i=1}^r k(G_i)}^{1/r}$. 

\begin{corollary}
Let $r\ge2$ and suppose that $r^m \le n < r^{m+1}$.  Suppose $K_n$ is $r$-colored and that $G_i$ is the graph of color $i$ for each $i\in [r]$. Then \[\sum_{i=1}^r k(G_i) \ge r^{\frac{1}{2r}m^2 - O(m\log m)} = n^{(\frac{1}{2r} + o(1))\log_r n}.\]
\end{corollary}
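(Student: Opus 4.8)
The statement to prove is: if $r \ge 2$, $r^m \le n < r^{m+1}$, and $K_n$ is $r$-colored with color classes $G_1,\ldots,G_r$, then $\sum_{i=1}^r k(G_i) \ge r^{\frac{1}{2r}m^2 - O(m\log m)} = n^{(\frac{1}{2r}+o(1))\log_r n}$. The proof is immediate from Theorem~\ref{thm:r-col-lb} together with the arithmetic–geometric mean inequality, exactly as the preamble to the corollary indicates, so there is essentially no obstacle here — the only mild care needed is tracking how the exponent transforms under the $r$-th root.

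First I would invoke the AM–GM inequality in the form $\frac{1}{r}\sum_{i=1}^r k(G_i) \ge \left(\prod_{i=1}^r k(G_i)\right)^{1/r}$, which gives $\sum_{i=1}^r k(G_i) \ge r\left(\prod_{i=1}^r k(G_i)\right)^{1/r}$. Then I would substitute the lower bound from Theorem~\ref{thm:r-col-lb}, namely $\prod_{i=1}^r k(G_i) \ge r^{\frac12 m^2 - O(m\log m)}$, to obtain
\[
\sum_{i=1}^r k(G_i) \ge r\cdot \left(r^{\frac12 m^2 - O(m\log m)}\right)^{1/r} = r^{\,1 + \frac{1}{2r}m^2 - \frac{1}{r}O(m\log m)} = r^{\frac{1}{2r}m^2 - O(m\log m)},
\]
where in the last step the additive constant $1$ and the factor $1/r$ on the error term are both absorbed into the $O(m\log m)$ term (this is legitimate since $r$ is fixed and $m\log m$ dominates a constant).

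Finally I would translate the bound into the $n$-form. Since $r^m \le n < r^{m+1}$ we have $m = \lfloor \log_r n\rfloor = (1+o(1))\log_r n$, and correspondingly $m^2 = (1+o(1))(\log_r n)^2$ while $m\log m = o((\log_r n)^2)$. Hence $\frac{1}{2r}m^2 - O(m\log m) = \left(\frac{1}{2r}+o(1)\right)(\log_r n)^2$, and since $r^{(\log_r n)^2} = n^{\log_r n}$, we conclude $\sum_{i=1}^r k(G_i) \ge n^{(\frac{1}{2r}+o(1))\log_r n}$, as claimed. The one step to state carefully is that the $o(1)$ here is an $o(1)$ as $n\to\infty$ with $r$ fixed, matching the convention used throughout the paper.
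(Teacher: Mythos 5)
Your proposal is correct and follows exactly the route the paper indicates: apply the AM--GM inequality $\sum_{i=1}^r k(G_i) \ge r\left(\prod_{i=1}^r k(G_i)\right)^{1/r}$ to the lower bound of Theorem~\ref{thm:r-col-lb}, absorb the constants into the $O(m\log m)$ term, and convert to the $n$-form via $m=(1+o(1))\log_r n$. This matches the paper's (one-line) argument, with your version simply making the bookkeeping explicit.
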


\section{Upper bound on $\pi_{t}(G)$}\label{sec:pifix}

In this section we prove Theorem \ref{thm:pil}. 
As noted in the introduction, to find an upper bound on $\pi_{t}(G)=i_{t}(G)k_{t}(G)$, we begin by following the method of Frankl et al.~\cite{FKKT}. We give a moderately terse description of the setup and refer the reader to Section 2.3 of \cite{FKKT} for more details.

Firstly, we note that by Lemma~\ref{lem:thres}, we need only to maximize $\pi_{t}(T)$ among threshold graphs.  As noted above, a threshold graph $T$ on $n$ vertices is formed by successively adding dominating and isolated vertices and so can be associated with a binary code where $+$ denotes an added dominating vertex and $-$ denotes an isolate.  The set of all dominating vertices forms a clique, say $V_K$, in $T$ and the set of all isolates forms an independent set, say $V_I$.  Note that $V(T)$ is the disjoint union of $V_I$ and $V_K$ and there is, of course, a bipartite graph between $V_I$ and $V_K$.  If we want to count the number of cliques of size $t$ in $T$, then we get
\[
	S_K:=k_{t}(T)=\binom{\abs{V_K}}{t}+\sum_{w\in V_I} \binom{d_T(w)}{t-1}.
\]
Likewise, if we want to count the number of independent sets of size $t$, we get
\[
	S_I:=i_{t}(T)=\binom{\abs{V_I}}{t}+\sum_{v\in V_K} \binom{d_{\overline{T}}(v)}{t-1}.
\]

\newcommand{\vkb}{r}
\newcommand{\vib}{s}
\newcommand{\tbar}{\overline{T}}
\renewcommand{\t}{\tau}

We can order the vertices in $V_K$ and $V_I$ according to their degree in $\tbar$ and $T$, respectively (note that we are effectively ignoring the edges within $V_K$ since we only consider $d_{\tbar}(v)$ for all $v\in V_K$).   We let $V_K=\set{v_1,v_2,\ldots,v_\vkb}$ where $d_{\tbar}(v_i)=a_i$ and $a_1\geq a_2\geq \cdots\geq a_\vkb$.  Also, $V_I=\set{w_1,w_2,\ldots,w_\vib}$ where $d_{T}(w_j)=b_j$ and $b_1\leq b_2\leq \cdots\leq b_\vib$.  Thus, we have
\[
	S_K=\frac{1}{t !}\left(\vkb^{t}+t\sum_{j=1}^\vib b_j^{t-1}+O(n^{t-1})\right)\qquad\text{and}\qquad S_I=\frac{1}{t !}\left(\vib^{t}+t\sum_{i=1}^\vkb a_i^{t-1}+O(n^{t-1})\right).
\]
\newcommand{\ba}{\tbf{a}}
\newcommand{\bb}{\tbf{b}}
\newcommand{\bc}{\tbf{c}}
So, we are interested in maximizing the product $S_KS_I$.  In order to do this, we would like to shift to a continuous version of the problem.  However, we must first limit the types of sequences $\ba=(a_1,a_2,\ldots,a_\vkb)$ and $\bb=(b_1,b_2,\ldots,b_\vib)$.  We can do this using the well-known Gale-Ryser Theorem~\cite{G,R} that classifies the degree sequences of bipartite graphs.  Recall that given a sequence $\bc=(c_1,c_2,\ldots,c_s)$, we obtain its \emph{conjugate} $\bc^*=(c_1^*,c_2^*,\ldots)$ by defining $c_j^*=\abs{\set{i:c_i\geq j}}$.  Also we say that a sequence $\ba$ is \emph{majorized} by $\bc$, denoted $\ba\prec \bc$, if $\sum_{i=1}^k a_i\leq \sum_{i=1}^k c_i$ for all $k$ (where we extend one of the sequences by appending $0$s if necessary).

\begin{theorem}[Gale, Ryser]
	Let $\ba=(a_1,a_2,\ldots,a_r)$ and $\bc=(c_1,c_2,\ldots,c_s)$ be non-increasing sequences of non-negative integers with the same sum.  Then there is a bipartite graph $G$ with partition $V(G)=A\cup C$ such that $\ba$ and $\bc$ are the degree sequences of $A$ and $C$ if and only if $\ba\prec \bc^*$.
\end{theorem}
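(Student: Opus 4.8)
This is the classical Gale--Ryser theorem, so the plan is to reconstruct a standard proof, splitting it into the easy necessity direction and the substantive sufficiency direction. \textbf{Necessity.} Suppose a bipartite realization $G$ exists and fix $k$. The $k$ vertices of $A$ of largest degree contribute $\sum_{i=1}^{k}a_i$ edges, and each $v_j\in C$ is incident to at most $\min(k,c_j)$ of them, so $\sum_{i=1}^{k}a_i\le\sum_{j}\min(k,c_j)$. Now $\min(k,c_j)=\abs{\setof{i\in[k]}{c_j\ge i}}$, so summing over $j$ and swapping the order of summation gives $\sum_{j}\min(k,c_j)=\sum_{i=1}^{k}\abs{\setof{j}{c_j\ge i}}=\sum_{i=1}^{k}c_i^{*}$. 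Hence $\sum_{i=1}^{k}a_i\le\sum_{i=1}^{k}c_i^{*}$ for every $k$, that is, $\ba\prec\bc^{*}$.

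\textbf{Sufficiency.} Here I would run a feasibility (max-flow) argument; a $2$-switch exchange argument is an equally valid alternative. Build a network with a source and a sink, vertex set $A\disj C$, an arc from the source to each $u_i$ of capacity $a_i$, an arc from each $v_j$ to the sink of capacity $c_j$, and all arcs $u_iv_j$ of capacity $1$. A bipartite graph realizing $(\ba,\bc)$ is exactly an integral flow saturating every source- and sink-arc, equivalently an integral flow of value $\sum_i a_i=\sum_j c_j$; since all capacities are integers, such a flow exists iff every source--sink cut has capacity at least $\sum_i a_i$. A cut is determined by sets $S\of A$ and $T\of C$ placed on the source side, and has capacity $\sum_{u_i\notin S}a_i+\abs{S}\cdot\abs{C\setminus T}+\sum_{v_j\in T}c_j$; cancelling $\sum_{u_i\notin S}a_i$, the cut condition becomes $\sum_{u_i\in S}a_i\le\abs{S}\cdot\abs{C\setminus T}+\sum_{v_j\in T}c_j$ for all $S,T$. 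For $\abs{S}=k$ the left side is largest when $S$ consists of the $k$ vertices of $A$ of largest degree, giving $\sum_{i=1}^{k}a_i$, and the right side is smallest when $T=\setof{v_j}{c_j<k}$, giving $\sum_{j}\min(k,c_j)=\sum_{i=1}^{k}c_i^{*}$ by the identity above. So the min-cut condition over all cuts is precisely $\ba\prec\bc^{*}$, the hypothesis, and integrality of the maximum flow yields the required bipartite graph.

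The work, and the main obstacle, lies wholly in the sufficiency direction: one must verify that realizing bipartite graphs correspond exactly to saturating integral flows in both directions, and that the minimum cut, a priori ranging over all pairs $(S,T)$, is attained by the ``monotone'' pair above (top-$k$ degrees of $A$, and $T$ chosen by the sign of $c_j-k$) before the double-counting identity collapses the whole family of cut inequalities to the single majorization condition. If one prefers the exchange proof instead, the analogous delicate point is this: starting from the canonical staircase realization of $(\bc^{*},\bc)$ --- join each $v_j$ to the first $c_j$ vertices of $A$, which has $A$-degree sequence $\bc^{*}$ --- and repeatedly moving an edge from an over-full vertex of $A$ to an under-full one by a $2$-switch, one must show such a switch can always be chosen so that the sorted $A$-degree sequence stays majorized by $\bc^{*}$ while strictly decreasing its $\ell_1$-distance to $\ba$, which forces termination at $\ba$.
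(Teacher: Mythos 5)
The paper does not prove this statement: it is the classical Gale--Ryser theorem, quoted with citations to Gale and Ryser and used as a black box in Section 4, so there is no in-paper argument to compare against. Your reconstruction is a correct standard proof. The necessity direction is fine: double counting the edges leaving the $k$ highest-degree vertices of $A$, bounding the contribution of each $v_j$ by $\min(k,c_j)$, and the identity $\sum_j\min(k,c_j)=\sum_{i=1}^k c_i^*$ (swap the order of summation over $i\le k$ and $j$) gives exactly $\ba\prec\bc^*$ as defined in the paper. The sufficiency direction via max-flow is also sound: your cut parametrization by $(S,T)$ and its capacity $\sum_{u_i\notin S}a_i+\abs{S}\,\abs{C\setminus T}+\sum_{v_j\in T}c_j$ are correct, the worst case over $S$ of size $k$ is the top-$k$ degrees of $A$, the optimal $T$ is $\setof{v_j}{c_j<k}$ yielding $\sum_j\min(k,c_j)$, and integrality of max-flow (all capacities integral) converts the saturating flow into a realizing bipartite graph; the hypothesis of equal sums is what makes ``saturating'' equal to ``value $\sum_i a_i$.'' Your alternative $2$-switch sketch starting from the staircase realization of $(\bc^*,\bc)$ is the other standard route and would also work, though as written it is only an outline of the exchange step, so the flow argument is the one that stands on its own here.
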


Note that we cannot apply the Gale-Ryser Theorem to our sequences $\ba$ and $\bb$ above since $\ba$ is a degree sequence in $\tbar$ and $\bb$ is a degree sequence in $T$.  But if we let $c_j=\vkb-b_j$, then $\bc=(c_1,c_2,\ldots,c_\vib)$ is the degree sequence of $V_I$ in $\tbar$ and, further, $c_1\geq c_2\geq \cdots \geq c_\vib$.  Since $\sum_{i=1}^\vkb a_i=\sum_{j=1}^\vib c_j$, the Gale-Ryser Theorem yields that $\ba\prec \bc^*$.

Since $a_i^{t-1}$ is a convex function, we note that if $\ba\prec \hat{\ba}$, then $\sum a_i^{t-1}\leq \sum \hat{a}_i^{t-1}$.  Suppose that we fix the sequence $\bb$ so that $S_K$ is fixed.  Then we'd like to maximize $S_I$ or, equivalently, maximize $\sum a_i^{t-1}$.  But since $\ba\prec \bc^*$, we need to take $\ba=\bc^*$. 
If $\ba=(a_1,a_2,\ldots,a_\vkb)$ and $\bb=(b_1,b_2,\ldots,b_\vib)$, where $\vib\geq a_1\geq a_2\geq\cdots a_\vkb\geq 0$ and $0\leq b_1\leq b_2\leq\cdots\leq b_\vib\leq \vkb$, then we say $\ba$ and $\bb$ are \emph{packed} if $\ba=\bc^*$ where $\bc=(\vkb-b_1,\vkb-b_2,\ldots,\vkb-b_\vib)$.  We are interested in 
\[
	g_{t}(\vkb,\vib)=\frac{1}{(t !)^2}\max \setof{\left(\vkb^{t}+t\sum_{j=1}^\vib b_j^{t-1}\right)\left(\vib^{t}+t\sum_{i=1}^\vkb a_i^{t-1}\right)}{\textup{$\ba$ and $\bb$ are packed}},
\]
where the $\ba$ and $\bb$ in the maximization problem are as above.  Lastly, we let
\[
	g_{t}(n)=\max\setof{g_{t}(\vkb,\vib)}{1\leq \vkb\leq \vib\leq n, \vkb+\vib=n}.
\]
We have that for any $G$ on $n$ vertices, $\pi_t(G)\le g_t(n)$.
The packing requirement on $\ba$ and $\bb$ means that if we are given $\vkb$, $\vib$, and $\bb$, then $\ba$ is determined.  Further, we can visualize this packing by noting that $\ba$ and $\bb$ can be packed into an $\vkb\times \vib$ rectangle of unit squares where the border between the $\ba$ squares and the $\bb$ squares yields an ``up-right'' path from $(0,0)$ to $(\vib,\vkb)$.  
As an example, 
let $\vib=4, \vkb=3$ and $\bb= (0,1,1,3)$. Then $\bc = (3, 2, 2, 0)$ and so $\ba = \bc^* = (3,3,1)$. This example is visualized in Figure \ref{fig:pack}.

\begin{figure}[h]
\begin{center}
\includegraphics[scale=1]{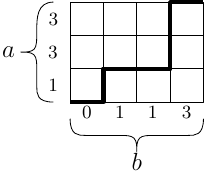}
\caption{Packing $\ba=(3,3,1)$ and $\bb=(0,1,1,3)$ into a $3\times 4$ rectangle.} \label{fig:pack}
\end{center}
\end{figure}

These paths  generalize easily to a continuous setting.
We let $p,q\in \mathbb{R}^+$ be such that $p+q=1$ (interpreting $p$ as $r/n$ and $q$ as $s/n$). Let $x:[0,1]\to [0,q]$ and $y:[0,1]\to [0,p]$ be differentiable.  The path $\ell=\setof{(x(\t),y(\t))}{0\leq \t\leq 1}$ is said to be a \emph{border} if 
\begin{enumerate}
	\item $\ell$ starts at $(0,0)$ and ends at $(q,p)$, i.e., $x(0)=y(0)=0$, $x(1)=q$, and $y(1)=p$,
	\item $\ell$ is non-decreasing, i.e., $x'(\t)\geq 0$ and $y'(\t)\geq 0$ for all $\t\in [0,1]$, and
	\item each segment of $\ell$ is either a horizontal or a vertical line, i.e., $x'(\t)y'(\t)=0$ for all $\t\in [0,1]$. 
\end{enumerate}
Let $\mathcal{L}=\mathcal{L}(q)$ be the set of all borders, and let $\mathcal{L}=\cup_{i\geq 1} \mathcal{L}_i$, where $\mathcal{L}_i$ is the set of borders with exactly $i$ turns.  Finally, define
\begin{equation}\label{eq:hqdef}
	h_{t}(q)=\max_{\ell\in \mathcal{L}} \left(p^{t}+t I_X(\ell)\right)\left(q^{t}+t I_Y(\ell)\right),
\end{equation}
where 
\[
	I_X(\ell)=I_X=\int_0^1 (y(\tau))^{t-1}x'(\tau)\ d\tau\quad\text{and}\quad I_Y(\ell)=I_Y=\int_0^1 (x(\t))^{t-1} y'(\t)\ d\t\
\]
for $\ell=\setof{(x(\t),y(\t))}{0\leq \tau\leq 1}\in \mathcal{L}$.
So, we have
\[
	I_X(\ell)\leq qp^{t-1}\quad\text{and}\quad I_Y(\ell)\leq pq^{t-1},
\]
for all $\ell\in \mathcal{L}$. We then have that for any $G$ on $n$ vertices, 
\begin{equation}\label{eq:pitubscaled}
\pi_t(G)\le \op{\frac{n^{t}}{t!}}^2\max_{q\in [0,1]}h_t(q) + O(n^{2t-1}).
\end{equation}
Our goal is to show that, in fact, that the maximum in  \eqref{eq:hqdef} occurs in $\mathcal{L}_1$ (which contains only two paths: ``up-right'' and ``right-up.''). These paths correspond to threshold graphs with only one sign change in their code. Once we know this, the maximization in \eqref{eq:pitubscaled} becomes a simple single variable calculus problem. 
Crucially, the following result of Frankl et al.~\cite{FKKT}, which is Lemma 6 in their paper, tells us that we can already restrict our attention to borders in $\mathcal{L}_1\cup \mathcal{L}_2$.

\begin{theorem}[Frankl, Kato, Katona, Tokushige \cite{FKKT}]
	Let $n\geq 3$.  For every $\ell\in \mathcal{L}_n$, there is an $\ell'\in \mathcal{L}_{n'}$ with $n'<n$ such that
	\[
		I_X(\ell')>I_X(\ell)\qquad\text{and}\qquad I_Y(\ell')>I_Y(\ell).
	\]
\end{theorem}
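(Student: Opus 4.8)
The plan is to recast the statement as a problem about staircase regions, reduce it to a single minimisation inequality, and then settle that inequality by a convexity/unimodality argument that crucially uses $t\ge 3$. First the reformulation: a border $\ell\in\mathcal L$ is exactly the graph of a non-decreasing step function $f\colon[0,q]\to[0,p]$ with $f(0)=0$ and $f(q)=p$ (the horizontal segments are its intervals of constancy, the vertical segments its jumps). With $R=[0,q]\times[0,p]$, $A(\ell)=\setof{(x,y)}{0\le y<f(x)}$ and $B(\ell)=R\wo A(\ell)$, Fubini gives
\[
  I_X(\ell)=(t-1)\iint_{A(\ell)}y^{\,t-2}\,dx\,dy,\qquad
  I_Y(\ell)=(t-1)\iint_{B(\ell)}x^{\,t-2}\,dx\,dy .
\]
Under this dictionary $\mathcal L_1\cup\mathcal L_2$ is precisely the set of \emph{trivial} staircases $A$ — namely $\0$, $R$, a bottom strip $[0,q]\times[0,y_0]$, or a right strip $[x_0,q]\times[0,p]$ — and ``$\ell$ has at least $3$ turns'' is the same as ``$A(\ell)$ is non-trivial''. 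Since a trivial staircase has at most $2<n$ turns, it is enough to prove that every non-trivial staircase is beaten on both integrals by a trivial one.

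To make this concrete, write $Y=(t-1)\iint_A y^{t-2}$ and $Z=(t-1)\iint_A x^{t-2}$, so $I_X(\ell)=Y$ and $I_Y(\ell)=pq^{t-1}-Z$. Evaluating the two natural candidates, one finds that some right strip beats $A$ on both integrals exactly when $Z>Z_{\mathrm{rs}}(Y):=pq^{t-1}-p\op{q-Y/p^{t-1}}^{t-1}$, and that some bottom strip beats $A$ on both exactly when $Z>Z_{\mathrm{bs}}(Y):=q^{t-1}\op{Y/q}^{1/(t-1)}$; here $Z_{\mathrm{rs}}(Y)$ and $Z_{\mathrm{bs}}(Y)$ are just the $Z$-values of the unique right strip, respectively bottom strip, having that value of $Y$. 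So the theorem is equivalent to: for every non-trivial staircase $A$ one has $Z>\min\set{Z_{\mathrm{rs}}(Y),\,Z_{\mathrm{bs}}(Y)}$ — equivalently, among all staircases with a prescribed value of $\iint_A y^{t-2}$, the quantity $\iint_A x^{t-2}$ attains its minimum only at a bottom strip or a right strip.

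I would prove this minimisation claim in two steps. In terms of $f$, we must minimise the linear functional $\int_0^q x^{t-2}f(x)\,dx$ over non-decreasing $f\colon[0,q]\to[0,p]$ subject to $\int_0^q f(x)^{t-1}\,dx=C$ fixed. Step (i), collapse to a rectangle: since $u\mapsto u^{t-1}$ is convex ($t\ge 3$), a Lagrange/boundary analysis on the constrained set forces a minimiser to take the value $0$ (or $p$) on all but one of its intervals of constancy, so it has the form $f=b$ on $[a,q]$ and $f=0$ elsewhere, i.e.\ $A=[a,q]\times[0,b]$, or the complementary shape $A=R\wo([0,a]\times[b,p])$; either family interpolates between the bottom strip and the right strip as its parameter runs over its range. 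Step (ii), unimodality along the family: parametrising by $a$, the value $\iint_A x^{t-2}$ equals, up to a positive factor, $(q^{t-1}-a^{t-1})(q-a)^{-1/(t-1)}$, and an explicit derivative computation (again using $t\ge 3$) shows this function is increasing then decreasing in $a$, hence is minimised over the parameter interval at one of its endpoints — a strip — with every interior value, that is, every genuinely $\mathcal L_3$ configuration, strictly larger. Together (i) and (ii) give the claim, and hence the theorem.

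The main obstacle is step (i). A single corner move does not collapse a staircase: moving one step of $f$ changes $\iint_A y^{t-2}$, and rescaling $f$ to restore it typically raises $\iint_A x^{t-2}$ at some corners while lowering it at others, so no one-step surgery simultaneously improves $I_X$ and $I_Y$. One must instead work directly with the set $\set{f\text{ non-decreasing}}{0\le f\le p,\ \int_0^q f^{t-1}=C}$, locate where the linear functional $\int x^{t-2}f$ attains its minimum, and argue — via the convexity of $u^{t-1}$ — that a minimiser can be pushed to a single rectangle (on each ``edge'' of the non-decreasing cone, optimising the height variables forces all but the topmost to the boundary value $0$). Step (ii) is then a routine one-variable estimate. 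For the record I would first try the cheaper route of building $\ell'$ by local surgery on $\ell$, and expect it to fail for exactly the reason just described, which is what motivates passing to the minimisation reformulation in the first place.
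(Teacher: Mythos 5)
Your reformulation is correct and appealing: writing $I_X(\ell)=(t-1)\iint_{A}y^{t-2}$ and $I_Y(\ell)=(t-1)\iint_{B}x^{t-2}$, identifying $\mathcal L_1\cup\mathcal L_2$ with the strips, and reducing the lemma to ``among staircases with $\iint_A y^{t-2}$ prescribed, $\iint_A x^{t-2}$ is minimised only at a strip'' is a valid equivalent (modulo the small perturbation needed to upgrade ``equal $I_X$, strictly larger $I_Y$'' to strict improvement in both, which the monotone one-parameter strip families do supply). Your step (ii) also checks out for the rectangle family: with $g(a)=(q^{t-1}-a^{t-1})(q-a)^{-1/(t-1)}$ one finds $(\log g)'$ changes sign exactly once, from positive to negative, precisely because $t\ge 3$, so interior members strictly exceed the smaller endpoint value. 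You still owe the analogous computation for the complement-of-rectangle family, which is a different function of the parameter. (For what it is worth, the paper under review does not prove this statement at all; it imports it as Lemma 6 of \cite{FKKT}, so there is no in-paper proof to compare against.)

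The genuine gap is step (i), and you have diagnosed it yourself. The assertion that a minimiser of the linear functional $\int_0^q x^{t-2}f$ over non-decreasing $f$ with $0\le f\le p$ and $\int_0^q f^{t-1}=C$ must be a two-step function or its complement does not follow from any standard convexity principle: the constraint set is a level set of a \emph{convex} functional, hence non-convex, and \emph{minimising} a linear functional over such a set is not controlled by extreme points the way maximisation over a convex set is. Writing $f=\sum_j h_j\1_{[x_j,q]}$, the first-order conditions only say that $(q^{t-1}-x_j^{t-1})\big/\int_{x_j}^q f^{t-2}$ takes a common value at every jump point with $h_j>0$, and nothing yet excludes three or more such points; a second-order or global argument is required and is not supplied. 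Moreover, the mechanism you propose (``all but the topmost height variable is forced to $0$'') would produce only the rectangle family and would miss the complement family, which your own step (ii) acknowledges must be in the competition. Finally, note that the lemma asks only for $n'<n$, not $n'\le 2$: that is a strong hint that the intended argument is a local surgery/induction (slide two interior segments simultaneously, at rates chosen so that both $I_X$ and $I_Y$ strictly increase, until two segments merge and the number of turns drops), i.e., essentially the route you set aside as likely to fail. As written, the proposal is an honest and well-structured plan, but its central reduction is asserted rather than proved, so the argument is incomplete.
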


Further, paths in $\mathcal{L}_2$ either start going up, turn right, go all the way across, and finally turn up to end at $(q,p)$, or they start going right, turn up, go all the way up, and turn right to end at $(q,p)$.  By symmetry, we can analyze the former case which is pictured in Figure~\ref{fig:abc}.

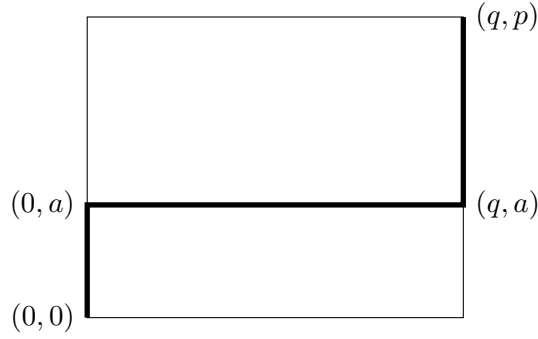
\begin{figure}[h]
\begin{center}
\begin{tikzpicture}
	\draw (0,0) rectangle (5,4);
	\draw[line width=2pt] (0,0) node[anchor=east] {$(0,0)$} -- (0,1.5) node[anchor=east] {$(0,a)$} -- (5,1.5) node[anchor=west] {$(q,a)$} -- (5,4) node[anchor=west] {$(q,p)$};
\end{tikzpicture}
\caption{An example of a path $\ell$ in $\mathcal{L}_2$ with turn points $(0,a)$ and $(q,a)$.}\label{fig:abc}
\end{center}
\end{figure}
Note that, using $(0,a)$ as the point at which the border turns right as in Figure~\ref{fig:abc}, we have
\[
	I_X(\ell)=qa^{t-1}\qquad\text{and}\qquad I_Y(\ell)=(p-a)q^{t-1}.
\]
We let $b=p-a$, so that $p=a+b$, and let $q=c$.  Then by \eqref{eq:hqdef}, we are trying to maximize
\begin{equation}
	\left((a+b)^{t}+t ca^{t-1}\right)\left(c^{t}+t bc^{t-1}\right)\label{eqn:prod}
\end{equation}
subject to $a,b,c\geq 0$ and $a+b+c=1$. Note that if $c=0$, then this product is 0, so we will assume below that $c\neq 0$.
We would like to show that the maximum is achieved on the boundary (when either $a=0$ or $b=0$).  
\begin{lemma}
If $g(a,b,c) = \left((a+b)^{t}+t ca^{t-1}\right)\left(c^{t}+t bc^{t-1}\right)$ has a maximum on \[\setof{(a,b,c)\in \mathbb{R}^3}{a+b+c=1, a,b,c\ge0}\] at a point $(a^*, b^*, c^*)$, then $a^* =0$ or $b^*=0.$
\end{lemma}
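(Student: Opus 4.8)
The plan is to argue by contradiction. Suppose the maximum is attained at $(a^*,b^*,c^*)$ with $a^*>0$ and $b^*>0$. First note that $c^*>0$: if $c^*=0$ the second factor of $g$ vanishes, so $g(a^*,b^*,c^*)=0$, whereas $g$ is positive at, say, $(\tfrac13,\tfrac13,\tfrac13)$. Hence $(a^*,b^*,c^*)$ lies in the relative interior of the simplex and is therefore a Lagrange critical point, $\nabla g=\lambda(1,1,1)$. The strategy is to show that $g$ has exactly one interior critical point and that it is \emph{not} a local maximum, which contradicts global maximality and forces $a^*=0$ or $b^*=0$.

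Write $P=(a+b)^t+tca^{t-1}$ and $Q=c^t+tbc^{t-1}$, so $g=PQ$. Expanding $\partial_a g=\partial_b g$ and cancelling positive common factors yields $(t-1)a^{t-2}Q=Pc^{t-2}$; substituting this into $\partial_b g=\partial_c g$ and cancelling $Q>0$ collapses everything to the identity $(a+b)^{t-1}-a^{t-1}=(t-1)^2 b\,a^{t-2}$. With $\rho=(a+b)/a>1$ this reads $\rho^{t-1}-1=(t-1)^2(\rho-1)$; since $\rho\mapsto\rho^{t-1}-1-(t-1)^2(\rho-1)$ is strictly convex on $(0,\infty)$, vanishes at $\rho=1$, and has derivative $(t-1)(2-t)<0$ there, it has a unique root $\rho_*>1$. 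Feeding $\rho=\rho_*$ back into $(t-1)a^{t-2}Q=Pc^{t-2}$ and setting $\delta=c/a$ gives the quadratic $(t-1)\delta^2+t\big((t-1)\rho_*-t\big)\delta-\rho_*^{\,t}=0$, which has negative constant term and hence a unique positive root $\delta_*$; together with $a+b+c=1$ this pins the interior critical point down uniquely.

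To show it is not a local maximum I would freeze $c$ at $c^*$ and study $\psi(a')=g\big(a',\,1-c^*-a',\,c^*\big)$; then $a+b=1-c^*$ is constant, so $\psi$ is a product of the increasing convex function $a'\mapsto(1-c^*)^t+tc^*(a')^{t-1}$ and a decreasing affine one, and a short computation gives $\psi''(a')=t(t-1)(c^*)^t(a')^{t-3}\big[(t-2)\big(c^*+t(1-c^*-a')\big)-2ta'\big]$. Because $(a^*,b^*,c^*)$ is a Lagrange point, $\psi'(a^*)=\partial_a g-\partial_b g=0$ there, so it suffices to show $\psi''(a^*)>0$. Using $b^*=(\rho_*-1)a^*$ and $c^*=\delta_*a^*$, the bracket at $a'=a^*$ equals $a^*\big[(t-2)\delta_*+t(t-2)(\rho_*-1)-2t\big]$, which is positive exactly when $\delta_*>\tfrac{t(t-(t-2)\rho_*)}{t-2}$; this holds automatically once $\rho_*\ge\tfrac{t}{t-2}$, since then the right side is $\le 0<\delta_*$. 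Finally, by the convexity picture above, $\rho_*\ge\tfrac{t}{t-2}$ is equivalent to the elementary inequality $\big(\tfrac{t}{t-2}\big)^{t-1}\le 1+\tfrac{2(t-1)^2}{t-2}$, which is an equality at $t=3$, is immediate at $t=4$, and for $t\ge 5$ follows because its left side is decreasing in $t$ with value $625/81<8$ at $t=5$, while its right side is increasing with value $35/3>8$ there. Thus $\psi''(a^*)>0$: $g$ has a strict local minimum along this segment at $(a^*,b^*,c^*)$, the desired contradiction.

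The main obstacle is the non-maximality step. One Lagrange relation does not suffice, because the $c$-frozen slice of $g$ need not be convex; it is precisely the \emph{second} relation $(a+b)^{t-1}-a^{t-1}=(t-1)^2 b a^{t-2}$ — equivalently, the fact that $\rho_*$ solves $\rho^{t-1}=1+(t-1)^2(\rho-1)$ — that forces $\rho_*$ into the region where the slice is locally convex. Recognizing that the quantity to control is $\rho_*\ge t/(t-2)$, spotting the cancellation that reduces this to $\big(\tfrac{t}{t-2}\big)^{t-1}\le 1+\tfrac{2(t-1)^2}{t-2}$, and verifying that inequality for all integers $t\ge 3$, is where the real work lies; the rest is a routine Lagrange-multiplier computation.
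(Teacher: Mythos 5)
Your proof is correct, but it excludes the interior critical point by a different mechanism than the paper. The paper foliates the constraint triangle by level curves of the second factor $c^{t}+tbc^{t-1}$, parametrizes each level curve by $c$, and uses only first-order information about the first factor along it: $dx/dc$ is negative at the $a=0$ end, zero at the $b=0$ end, and vanishes at exactly one interior point — determined by the same equation $(1+\lambda)^{t-1}=1+(t-1)^2\lambda$ with $\lambda=b/a$ that you extract from the Lagrange system — so the interior critical point sits below the $a=0$ endpoint value and cannot be the maximum; no second derivatives and no quantitative control on the root are needed. You instead follow the standard constrained-optimization route: Lagrange conditions at a putative interior maximizer, reduction to the resultant equation in $\rho=(a+b)/a$, and a second-derivative test along the segment with $c$ frozen, which forces you to establish the extra bound $\rho_*\ge \frac{t}{t-2}$, i.e.\ $\left(\frac{t}{t-2}\right)^{t-1}\le 1+\frac{2(t-1)^2}{t-2}$ for every integer $t\ge 3$. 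I checked your computations — the identity $(t-1)a^{t-2}Q=Pc^{t-2}$ from $\partial_a g=\partial_b g$, the collapse to $\rho^{t-1}-1=(t-1)^2(\rho-1)$, the formula for $\psi''$, and the positivity of the bracket when $\rho_*\ge \frac{t}{t-2}$ — and they are right. Your route buys a little more structure (uniqueness of the interior critical point and the fact that it is a strict local minimum along the $c$-fixed direction, hence a saddle), while the paper's route buys brevity, avoiding both the Hessian computation and the numerical inequality. Two small polish points: the quadratic in $\delta$ is not actually needed, since only $c^*>0$ and $\rho_*\ge\frac{t}{t-2}$ enter the sign of the bracket; and the assertion that $\left(\frac{t}{t-2}\right)^{t-1}$ is decreasing in $t$ deserves one line of justification — its logarithmic derivative is $\log\frac{t}{t-2}-\frac{1}{t}-\frac{1}{t-2}<0$ because the trapezoid rule overestimates $\int_{t-2}^{t}\frac{dx}{x}$ for the convex integrand $1/x$.
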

\begin{proof}
 We prove this by fixing the second factor of (\ref{eqn:prod}), i.e., $y:=c^{t}+t bc^{t-1}$, letting $c$ vary, and then maximizing the first factor of (\ref{eqn:prod}), i.e., $x:=(a+b)^{t}+{t}ca^{t-1}$.  
Suppose that $y = c^{t}+t bc^{t-1} = K$ for some fixed constant $K$. Notice, then, that the largest possible value of $c$ in this case occurs when $b$ is as small as possible (i.e. $b=0$) and the smallest possible value of $c$ occurs when $b$ is as large as possible (i.e. $a=0$). 
 Since $y=K$ is fixed, we have
\[
	\frac{dy}{dc}=t c^{t-1}\left(1+\frac{db}{dc}\right)+t(t-1)bc^{t-2} = 0.
\]
So, solving for $db/dc$, we get
\[
	\frac{db}{dc}=-1-(t-1)\frac{b}c.
\]
Since $a+b+c=1$, we know that $\frac{da}{dc}+\frac{db}{dc}+\frac{dc}{dc}=0$, and so $\frac{da}{dc}+\frac{db}{dc} = -1$ and  $\frac{da}{dc}=(t-1)\frac{b}c$.  So, differentiating $x$, we get
\begin{align}
	\frac{dx}{dc}&=t(a+b)^{t-1}\left(\frac{da}{dc}+\frac{db}{dc}\right)+t a^{t-1}+t(t-1)ca^{t-2}\frac{da}{dc}\nonumber \\ 
	&=t a^{t-1}+t(t-1)^2 ba^{t-2}-t(a+b)^{t-1}.\label{eqn:dxdc}
\end{align}
The maximum of $x$ (as a function of $c$ along the level curve $y=K$), occurs at either the largest possible $c$ (which happens when $b=0$), the smallest possible $c$ (which happens when $a=0$), or at a value of $c$ where $dx/dc=0$.  Let us rule out this third possibility. We have that $dx/dc=0$ implies
\[
	a^{t-1}+(t-1)^2ba^{t-2}=(a+b)^{t-1}.
\]
Note that if $a\neq 0$, we can divide through by $a^{t-1}$ to get
\[
	1+(t-1)^2\frac{b}a=\left(1+\frac{b}a\right)^{t-1}.
\]
Letting $\lambda=b/a$, we have
\[
	1+(t-1)^2\lambda=(1+\lambda)^{t-1}.
\]
But then by Descartes' rule of signs, this polynomial equation has a unique positive solution at some $\lambda^* \in (0,\infty)$. 

Putting this all together, we have that at the smallest possible $c$, it is the case that $a=0$, and so using (\ref{eqn:dxdc}), we have $dx/dc=-t b^{t-1}<0$.  At the largest possible $c$, we have $b=0$, and so $dx/dc=t a^{t-1}-t a^{t-1}=0$.  We've just shown that at exactly one point between, namely where $b=\lambda^* a$, we have $dx/dc=0$.  And so $x=(a+b)^{t}+{t}ca^{t-1}$, as a function of $c$, starts off decreasing, reaches a point where the derivative is $0$ and so that point in the middle cannot be a maximum.  This shows that the maximum occurs when either $a=0$ or $b=0$.  
\end{proof}

We now know that the maximum in \eqref{eq:hqdef} occurs in $\mathcal{L}_1 = \set{\ell_1, \ell_2}$ where $\ell_1$ is the path that goes up then right, and $\ell_2$ is the path that goes right then up. Note that for $\ell_1$, we have that $I_Y(\ell_1)=0$ and $I_X(\ell_1) = qp^{t-1}.$ For $\ell_2$, we have that $I_X(\ell_2)=0$ and $I_Y(\ell_2) = pq^{t-1}.$
Thus \[h_t(q) = \max\set{(p^t + tqp^{t-1})q^t, p^t(q^t + tpq^{t-1})}.\]
Note that these two functions are symmetric in $q$ and $p = 1-q$ and correspond to $\ell_1$ and $\ell_2$ respectively.
Thus if we find a maximum of the first function at $q=\mu_t$, then the second function has a maximum at $p = \mu_t$.
Let \[f(x) = ((1-x)^t + tx(1-x)^{t-1})x^t = x^t(1-x)^{t-1}(1 + (t-1)x)\] and note that $f(q)$ is the first function in the maximum and $f(1-q)$ is the second.
Then 
\[f'(x) = x^{t-1}(1-x)^{t-2}t\op{-(2t-2)x^2 + (t-2)x +1}\]
and we find that $f'$ has a unique zero on $(0,1)$ at $\mu_t := \frac14\frac{t-2 + \sqrt{t^2 + 4t - 4}}{t - 1}$ and $f(x)$ has a maximum at this point (since $f(0)=f(1)=0$). Thus (referring to \eqref{eq:pitubscaled}) the upper bound in Theorem \ref{thm:pil} is proved. 

Returning to the original interpretation discussed at the beginning of this section, the path $\ell_1$ corresponds to a threshold graph in which $|V_K| = r = pn = (1-q)n$ and $|V_I| = s = qn$ where $a_1,\ldots, a_r = 0$. In other words $d_{\tbar}(v) =0$ for all $v\in V_T$, and so $T$ is complete between $V_K$ and $V_I$. This corresponds to a threshold graph where all the $+$ symbols are to the left of all the $-$ symbols. The maximum of $S_KS_I$ in this case occurs when $q=\mu_t$ and is attained by a threshold graph with code $(+)^{(1-\mu_t) n} (-)^{\mu_t n}$. Similarly for $\ell_2$, we have a threshold graph $T$ where the bipartite graph between $V_K$ and $V_I$ is empty (i.e., a disjoint union of a clique and an independent set). The maximum of $S_KS_I$ in this case occurs when $q=1-\mu_t$ and is attained by the threshold graph with code $(-)^{(1-\mu_t)n}(+)^{\mu_t n} $.

\section{An upper bound on the multicolored product}\label{sec:train}

In this section, we will prove an upper bound on $\prod_{i=1}^r k(G_i)$ where $G_i$ is the graph induced by the $i^\text{th}$ color in an $r$-coloring of the edges of $K_n$.  

\begin{theorem}\label{thm:tuples}
 Let $G_1, \ldots, G_r$ be edge disjoint graphs on the same vertex set V of $n$ vertices. Then the number of  ordered sequences of sets of vertices $(S_1, \ldots, S_r)$ such that $S_j$ induces a clique on $G_j$ for all $j=1,2,\ldots,r$ and $\bigcup_{j=1}^r S_j=V$ is at most
\[
(4r-2)^{r(r-1)} n^{\binom{r}2}.
\]
 \end{theorem}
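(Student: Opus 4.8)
The plan is to exploit edge-disjointness to see that such a sequence is a very ``efficient'' covering of $V$, and then to count these efficient coverings by an induction on the number of colours in which each colour-pair is charged a single vertex of $V$.

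\emph{Structural preliminaries.} The crucial point is that if $u\neq v$ both lie in $S_j\cap S_k$ with $j\neq k$, then the edge $uv$ lies in both $G_j$ and $G_k$, contradicting edge-disjointness; hence $\abs{S_j\cap S_k}\le 1$ for all $j\neq k$. Two consequences follow. First, at most $\binom r2$ vertices lie in two or more of the $S_j$: writing $d(v):=\abs{\{j:v\in S_j\}}$, a vertex with $d(v)\ge 2$ ``occupies'' the (at most one-element) intersections of the $\binom{d(v)}2\ge 1$ colour-pairs it belongs to, and distinct multiply-covered vertices occupy disjoint sets of pairs. Second, $\sum_{j=1}^r\abs{S_j}=\sum_v d(v)=n+\sum_v(d(v)-1)\le n+\sum_v\binom{d(v)}2\le n+\binom r2$, using $d(v)-1\le\binom{d(v)}2$ and the previous point. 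So the sequences being counted are clique covers of $V$ with essentially no wasted incidences.

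\emph{The recursion.} I would induct on $r$, peeling off colour $r$. Given a valid sequence $(S_1,\dots,S_r)$, the sets $S_1,\dots,S_{r-1}$ are cliques of the edge-disjoint graphs $G_1,\dots,G_{r-1}$, and the covering condition forces $S_r=\bigl(V\setminus\bigcup_{j<r}S_j\bigr)\cup B$, where $B:=S_r\setminus\bigl(V\setminus\bigcup_{j<r}S_j\bigr)\subseteq\bigcup_{j<r}S_j$ meets each $S_j$ in at most one vertex and hence satisfies $\abs B\le r-1$. Thus $S_r$ is determined by $S_1,\dots,S_{r-1}$ up to a choice of $B$, of which there are at most $(n+1)^{r-1}$. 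Iterating this $r-1$ times peels colours $r,r-1,\dots,2$, and the number of ``interface'' vertices spent at the step removing colour $k$ is $k-1$ (one for each earlier colour it can meet), so the total number of vertex-choices is $\sum_{k=2}^r(k-1)=\binom r2$. This is the source of the $n^{\binom r2}$; the constant $(4r-2)^{r(r-1)}=\bigl((4r-2)^2\bigr)^{\binom r2}$ is then meant to absorb, over the $r(r-1)$ ordered ``peeling/interface'' steps, a bounded number of further choices (whether a given interface slot is used at all, and the at most $(2r-1)$-fold ambiguity in how the removed clique reattaches to what remains); once the shape of the recursion is fixed I expect this bookkeeping to be routine.

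\emph{The main obstacle.} The recursion just sketched is not literally clean, and this is where the real work lies. After removing $S_r$ the residual object is an $(r-1)$-tuple of cliques whose union has a complement that is a clique of a further edge-disjoint graph; a crude induction that simply summed over all possible peeled cliques of colour $r$ — of which there may be $2^n$ — would reintroduce a factor $2^n$, yielding only ``(number of covering sequences) $\le O_r(n^{\binom r2}2^n)$.'' That estimate is too weak: it is precisely the \emph{absence} of such a factor in Theorem~\ref{thm:tuples} that lets one deduce the bound $O_r(n^{\binom r2}2^n)$ on $\prod_i k(G_i)$ in Theorem~\ref{thm:multi-main}, by summing the present count over the $2^n$ choices of $U=\bigcup_i S_i$. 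So the difficulty is to carry the covering/efficiency information through the recursion so that the peeled clique is never chosen freely. Concretely, one should strengthen the inductive hypothesis to control $(r-1)$-tuples of cliques with a complement of prescribed type, and use that the neighbourhoods $N_{G_1}(v),\dots,N_{G_r}(v)$ of any vertex are pairwise disjoint — so a vertex can be assigned to only boundedly many colour classes once the rest of the cover is pinned down — together with the split-graph fact that a vertex set admits only $O(n)$ partitions into a clique of one graph and a clique of an edge-disjoint second graph (the $r=2$ base case). Making this precise, with the explicit constant $(4r-2)^{r(r-1)}$, is the technical heart of the section.
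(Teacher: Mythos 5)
Your proposal does not close the argument, and you say so yourself: the peeling recursion you set up breaks exactly at the point you identify (after removing colour $r$, the remaining $(r-1)$-tuple no longer covers $V$, so the inductive hypothesis does not apply, and summing freely over the peeled clique costs a factor $2^n$), and the ``strengthened inductive hypothesis'' that is supposed to repair this is never formulated or proved --- you explicitly defer it as ``the technical heart.'' Your structural preliminaries ($\abs{S_j\cap S_k}\le 1$ by edge-disjointness, at most $\binom r2$ multiply covered vertices) are correct and are indeed used in the real proof, but they are nowhere near sufficient: they control the overlap pattern of a single sequence, not the number of sequences, and the whole difficulty is to count the sequences without ever choosing a clique ``freely.''

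The missing idea in the paper's proof is an anchoring argument rather than an induction on $r$: fix one valid sequence, disjointify it to a partition $(T_1,\ldots,T_r)$ of $V$, and observe that any other valid partition $(S_1,\ldots,S_r)$ is a small perturbation of it, since $S_i\cap T_j$ is a clique in both $G_i$ and $G_j$ for $i\neq j$ and hence has at most one vertex, giving $\abs{S_i\setminus T_i},\abs{T_i\setminus S_i}\le r-1$. A sequence is then determined by choosing, for each ordered pair $(j,i)$, at most one vertex of $T_j$ to move into $S_i$; the crucial second ingredient is that this vertex must be ``viable'' for $G_i$ (adjacent in $G_i$ to almost all of $T_i$), and an edge-counting argument using edge-disjointness shows the digraph on $[r]$ recording which $T_j$ contain many vertices viable for $G_i$ has no $2$-cycles, so at most $\binom r2$ of the $r(r-1)$ ordered pairs contribute a factor of order $n$ while the rest contribute only $O(r)$; non-partition sequences cost a further $2^{r(r-1)}$. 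Your sketch contains neither the anchoring to a fixed reference partition nor the viability/no-$2$-cycle mechanism that caps the exponent of $n$ at $\binom r2$, so as it stands the proposal proves at best the weaker $O_r\bigl(n^{\binom r2}2^n\bigr)$ count that you yourself note is inadequate.
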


\begin{proof}
If there are no such sequences, then we are done. So suppose there is at least one such sequence. By omitting repeated vertices, if necessary, we can choose such a sequence where the unions are disjoint, say $T_1,\ldots, T_r.$ If $T_i = V$ for some $i$, then $G_i$ is complete. In this case, there are at most $(n+1)^{r-1} 2^{r-1}$ sequences $(S_1,\ldots, S_r)$ since every other $S_j$ is either a singleton or empty. 

Say that a vertex $v$ is \emph{viable} for graph $G_k$ if it is adjacent, in $G_k$, to all but at most $r-1$ vertices in $T_k$.  Define the directed graph $H$ on the vertex set $[r]=\set{1,\ldots, r}$ by having an edge from $j$ to $k$ if there are at least $2r-1$ vertices in $T_j$ which are viable for $G_k$.

Then there are no vertices $j$ and $k$ in $H$ such that both $j\to k$ and $k\to j$ are edges in $H$. For, otherwise, there would be a subset $U_j$ of $T_j$ of size $2r-1$ such that every $u\in U_j$ is viable for $G_k$.  Likewise, there would be a subset $U_k$ of $T_k$ of size $2r-1$ such that every $u\in U_k$ is viable for $G_j$.  Every vertex which is viable for $G_k$ is adjacent, in $G_k$, to all but at most $r-1$ vertices in $T_k$ and therefore to at least $2r-1-(r-1)=r$ vertices in $U_k$. So $G_k$ contains at least $r(2r-1)$ edges between $U_j$ and $U_k$. But, by the same argument, so does $G_j$.  This is a contradiction as $G_j$ and $G_k$ are edge-disjoint and there are only $|U_j||U_k|=(2r-1)^2$ such edges in total.  Thus there are at most $\binom{r}2$ directed edges in $H$.

Let us first count how many choices there are for $(S_1,\ldots, S_r)$ such that the $S_i$ form a partition of $V$. The main idea is that any such partition must ``mostly'' look like $(T_1,\ldots, T_r)$. Indeed, for any $j\neq k$, $S_j\cap T_k$ must form a clique in both edge disjoint graphs $G_j$ and $G_k$ which implies that $|S_j\cap T_k|\le 1$. So we have that for all $i\in [r]$, $|T_i\setminus S_i|\le r-1$ and $|S_i\setminus T_i| \le r-1$. 
Since $S_i$ contains all but at most $r-1$ elements of $T_i$, and $S_i$ is a clique in $G_i$, each element of $S_i$ must be viable for $G_i$. 

Note that $(S_1,\ldots, S_r)$ is determined if we specify the elements in $S_i\cap T_j$ for  $i\neq j.$ If $v\in S_i\cap T_j$, then $v\in T_j$ and $v$ is viable for $G_i$. 
 If $ji$ is not an edge of $H$, then there are at most $2r-2$ vertices in $T_j$ which are viable for $G_i$ and so there are at most $2r-1$ choices for $S_i\cap T_j$ (including the empty set). If $ji$ is an edge of $H$, then there are at most $|T_j| +1$ choices for  $S_i\cap T_j$ (again including the empty set).
Thus the number of sequences $(S_1,\ldots, S_r)$ is at most
\begin{align*}
	 (2r-1)^{r(r-1)}\prod_{j\to i\in H} (\abs{T_j}+1)\leq (2r-1)^{r(r-1)}n^{\binom{r}2},
\end{align*}
using the facts that $|T_j|\leq n-1$ for all $j\in [r]$ and $e(H)\leq \binom{r}2$.

Now we consider sequences of sets $(S_1,\ldots, S_r)$ which does not necessarily form a partition of $V$. Each such sequence can be obtained by starting with a sequence $(S_1',\ldots, S_r')$ that does form a partition and then adding some subset of $T_i\setminus S_i'$ to $S_i'$.  Thus each such $(S_1'\ldots, S_r')$ gives rise to at most $(2^{|T_i\setminus S_i'|})^r \le 2^{r(r-1)}$ many sequences $(S_1,\ldots, S_r).$ Thus the total number of sequences is at most
\[
	(2r-1)^{r(r-1)}n^{\binom{r}2}\cdot 2^{r(r-1)} = (4r-2)^{r(r-1)}n^{\binom r2}.\qedhere
\]

%


\end{proof}

\begin{corollary}\label{cor:multiprod}
Let $G_1,\ldots, G_r$ be edge-disjoint graphs on the same set of $n$ vertices, $V$. Then 
\[
	\prod_{i=1}^r k(G_i)\leq (4r-2)^{r(r-1)}n^{\stackrel{\binom{r}2}{}} 2^n.
\]
\end{corollary}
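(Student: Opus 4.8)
The plan is to derive this as a direct consequence of Theorem~\ref{thm:tuples}. The key observation is that $\prod_{i=1}^r k(G_i)$ counts exactly the ordered tuples $(S_1,\ldots,S_r)$ in which each $S_i$ is a clique of $G_i$ — with no constraint on the union. So I need to bound such unconstrained tuples in terms of the constrained ones counted by Theorem~\ref{thm:tuples}, where we additionally require $\bigcup_{i=1}^r S_i = V$.

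First I would partition the collection of all clique-tuples according to the set $W = \bigcup_{i=1}^r S_i \subseteq V$ that they cover. For a fixed $W$ with $|W| = w$, a tuple $(S_1,\ldots,S_r)$ covering exactly $W$ consists of cliques $S_i \subseteq W$ in the induced subgraphs $G_i[W]$ whose union is all of $W$. These induced subgraphs $G_1[W], \ldots, G_r[W]$ are still edge-disjoint graphs on the common vertex set $W$ of size $w$, so Theorem~\ref{thm:tuples} applies and gives at most $(4r-2)^{r(r-1)} w^{\binom r2} \le (4r-2)^{r(r-1)} n^{\binom r2}$ such tuples. Summing over all $W \subseteq V$ — there are $2^n$ of them — yields
\[
	\prod_{i=1}^r k(G_i) = \sum_{W \subseteq V} \#\{\text{tuples covering exactly } W\} \le 2^n \cdot (4r-2)^{r(r-1)} n^{\binom r2},
\]
which is the claimed bound.

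I expect essentially no obstacle here: the only point requiring a moment's care is confirming that Theorem~\ref{thm:tuples} is genuinely applicable to each $G_i[W]$, i.e.\ that passing to induced subgraphs preserves edge-disjointness and keeps the vertex set common — both of which are immediate — and that the bound $w^{\binom r2} \le n^{\binom r2}$ is monotone in $w$, which it is. Combined with Theorem~\ref{thm:r-col-lb}, this establishes the upper bound in Theorem~\ref{thm:multi-main} with $C_r = (4r-2)^{r(r-1)}$.
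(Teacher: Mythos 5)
Your proposal is correct and follows essentially the same route as the paper: partition the clique-tuples by their union $W$, apply Theorem~\ref{thm:tuples} to the (still edge-disjoint) induced subgraphs on $W$, and sum over all subsets of $V$. The paper organizes the sum as $\sum_{k}\binom{n}{k}k^{\binom r2}\le n^{\binom r2}2^n$ rather than bounding $w^{\binom r2}\le n^{\binom r2}$ termwise, but this is only a cosmetic difference.
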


\begin{proof}
Firstly, note that $\prod_{i=1}^r k(G_i)$ is simply the number of $r$-tuples $(S_1,S_2,\ldots,S_r)$ where $S_i$ is a clique in $G_i$ for all $i=1,2,\ldots,r$.  If we specify that $\cup_{i=1}^r S_i=S$ for some $S\of V$ with $\abs{S}=k$, then Theorem~\ref{thm:tuples} implies that the number of such tuples is at most
\[
	(4r-2)^{r(r-1)}k^{r(r-1)/2}.
\]
But then,
\begin{align*}
	\prod_{i=1}^r k(G_i)&\leq \sum_{k=1}^n \sum_{\substack{S\subseteq V\\ \abs{S}=k}} (4r-2)^{r(r-1)}k^{r(r-1)/2}\\
	&= (4r-2)^{r(r-1)} \sum_{k=1}^n \binom{n}{k} k^{r(r-1)/2}\\
	&\leq (4r-2)^{r(r-1)} n^{r(r-1)/2}2^n,
\end{align*}
where we used the inequality $\sum_{k}\binom nk k^t \le n^t2^n$ in the final step.
\end{proof}

We are able to construct edge-disjoint graphs $G_1,G_2,\ldots, G_r$ on $n$ vertices where $\prod_{i=1}^r k(G_i)$ matches the bound in Corollary~\ref{cor:multiprod} up to the constant multiple depending on $r$.  Note that the following construction works for any $n$ and $r$, but the bound is much cleaner to state when $n$ is divisible by $\binom{r}2$.  An example of the construction is given in Figure~\ref{fig:con}.

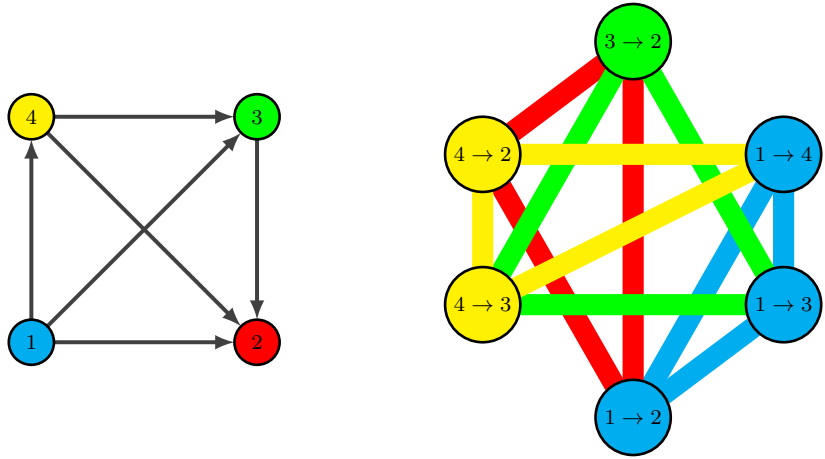
\begin{figure}[h]
\begin{center}
	\begin{tikzpicture}
		\Vertex[color=cyan,label=1]{A}
		\Vertex[color=red,x=3,label=2]{B}
		\Vertex[color=green,x=3,y=3, label=3]{C}
		\Vertex[color=yellow,y=3, label=4]{D}
		\Edge[Direct](A)(B)
		\Edge[Direct](A)(C)
		\Edge[Direct](A)(D)
		\Edge[Direct](C)(B)
		\Edge[Direct](D)(C)
		\Edge[Direct](D)(B)
		\Vertex[x=8,y=-1,size=1,label=1\to 2,Math,color=cyan]{E}
		\Vertex[x=10,y=0.5,size=1,label=1\to 3,Math,color=cyan]{F}
		\Vertex[x=10,y=2.5,size=1,label=1\to 4,Math,color=cyan]{G}
		\Vertex[x=8,y=4,size=1,label=3\to 2,Math,color=green]{H}
		\Vertex[x=6,y=2.5,size=1,label=4\to 2,Math,color=yellow]{I}
		\Vertex[x=6,y=0.5,size=1,label=4\to 3,Math,color=yellow]{J}
		\Edge[lw=8pt,color=cyan](E)(F)
		\Edge[lw=8pt,color=cyan](E)(G)
		\Edge[lw=8pt,color=cyan](G)(F)
		\Edge[lw=8pt,color=red](E)(H)
		\Edge[lw=8pt,color=red](H)(I)
		\Edge[lw=8pt,color=red](E)(I)
		\Edge[lw=8pt,color=green](J)(F)
		\Edge[lw=8pt,color=green](H)(F)
		\Edge[lw=8pt,color=green](H)(J)
		\Edge[lw=8pt,color=yellow](I)(J)
		\Edge[lw=8pt,color=yellow](G)(I)
		\Edge[lw=8pt,color=yellow](G)(J)
	\end{tikzpicture}
\end{center}
	\caption{An example of the construction in Theorem~\ref{thm:const}.  The tournament on the left leads to the coloring of $V$ on the right.}\label{fig:con}
\end{figure}

\begin{theorem}\label{thm:const}
	 If $\binom{r}2$ divides $n$, there exist edge-disjoint graphs $G_1,\ldots, G_r$  on a set of $n$ vertices, say $V$, where 
	 \[
	 	\prod_{i=1}^r k(G_i) \geq \left(\frac{n}{\binom{r}2}\right)^{\binom{r}2}  2^n.
	 \]
\end{theorem}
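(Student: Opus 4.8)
The plan is to realize the construction sketched in Figure~\ref{fig:con}. Fix an arbitrary tournament $\mathcal{T}$ on the vertex set $[r]$ (i.e.\ an orientation of $K_r$), partition $V$ into $\binom{r}{2}$ blocks $B_e$, one for each arc $e$ of $\mathcal{T}$, each of size $m:=n/\binom{r}{2}$ (an integer by hypothesis), and colour the edges of $K_n$ as follows. If two arcs $e$ and $f$ share a common vertex $k$ (necessarily unique, since two distinct edges of $K_r$ meet in at most one vertex), put every edge between $B_e$ and $B_f$ into $G_k$; if $e$ and $f$ are disjoint, leave those edges uncoloured. Inside a block $B_e$ with $e=(i\to j)$, put all $\binom{m}{2}$ internal edges into $G_i$, the colour of the tail. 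First I would verify that $G_1,\dots,G_r$ are pairwise edge-disjoint: a cross-block edge between $B_e$ and $B_f$ lies in $G_k$ only for the at-most-one common vertex $k$ of $e,f$, an internal edge of $B_e$ lies only in $G_{\mathrm{tail}(e)}$, and cross-block edges and internal edges never coincide. (We do not need the $G_k$ to cover $K_n$, which is why the disjoint pairs of arcs may simply be discarded.)

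Next I would bound $k(G_k)$ from below by counting only the cliques of $G_k$ supported on the ``star'' of $k$, namely on $\bigcup_{e\ni k}B_e$, the union of the $r-1$ blocks indexed by arcs incident to $k$. By construction $G_k$ is complete between any two of these blocks, so if one picks, independently for each arc $e\ni k$, a clique $S_e$ of $G_k[B_e]$, then $\bigcup_{e\ni k}S_e$ is automatically a clique of $G_k$, and distinct choices give distinct cliques (the blocks partition the star). Hence $k(G_k)\ge\prod_{e\ni k}k\bigl(G_k[B_e]\bigr)$. Now $G_k[B_e]$ is a complete graph on $m$ vertices when $k$ is the tail of $e$, contributing $2^m$ cliques (counting the empty one), and it is the empty graph on $m$ vertices when $k$ is the head of $e$, contributing $m+1$ cliques. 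Therefore $k(G_k)\ge 2^{\,m\,\mathrm{outdeg}_{\mathcal{T}}(k)}(m+1)^{\mathrm{indeg}_{\mathcal{T}}(k)}$.

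Finally I would take the product over $k\in[r]$. Since each arc of $\mathcal{T}$ is counted once by its tail and once by its head, $\sum_k\mathrm{outdeg}_{\mathcal{T}}(k)=\sum_k\mathrm{indeg}_{\mathcal{T}}(k)=\binom{r}{2}$, so
\[
 \prod_{i=1}^r k(G_i)\ \ge\ 2^{\,m\binom{r}{2}}(m+1)^{\binom{r}{2}}\ =\ 2^n(m+1)^{\binom{r}{2}}\ \ge\ 2^n\left(\frac{n}{\binom{r}{2}}\right)^{\binom{r}{2}},
\]
using $m\binom{r}{2}=n$ and $m=n/\binom{r}{2}$, which is exactly the claimed bound.

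The construction is the only real idea; everything after it is bookkeeping. The one point needing care — and the reason a tournament enters — is the conflict over the internal edges of a block $B_{(i\to j)}$, which both colour $i$ and colour $j$ would ``like'' to own so as to turn their star into a genuine clique; orienting each pair and awarding those internal edges to the tail breaks the tie consistently, at the cost of only replacing $2^m$ by $m+1$ for the $\mathrm{indeg}(k)$ head-blocks of each colour $k$, which is harmless since $m+1>m$. I would also double-check the trivial points: that $r\mid\binom{r}{2}$ is not required (only $\binom{r}{2}\mid n$), that the blocks are genuinely nonempty, and that $k(\cdot)$ here counts the empty clique so the ``$+1$'' terms are legitimate.
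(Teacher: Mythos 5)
Your proposal is correct and is essentially the paper's own proof: the same tournament-indexed partition into $\binom{r}{2}$ blocks, the same assignment of internal edges to the tail colour and cross-block edges to the common endpoint, and the same count of $2^m$ cliques per out-arc block and $m+1$ per in-arc block, multiplied over colours via the in/out-degree sums. The only difference is bookkeeping (you aggregate by vertex degrees in the tournament, the paper aggregates edge by edge), which yields the identical bound $2^n\bigl(n/\tbinom{r}{2}\bigr)^{\binom{r}{2}}$.
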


\begin{proof}

Let $T$ be any tournament on the vertices $1, 2, \ldots, r$. Split the $n$ vertices  of $V$ into $r(r-1)/2$ sets of equal size (or as equal as possible in the case that $\binom{r}2$ does not divide $n$) and label each set by an edge of the tournament, i.e., $S_{i\to j}$ for some edge $i\to j$. Then let $G_i$ consist of all edges within each set $S_{i\to j}$ and all edges between elements of $S_{e_1}$ and $S_{e_2}$ where $e_1$ and $e_2$ are distinct edges both incident to vertex $i$.  Note that the $G_i$ graphs are edge-disjoint.  Also, note that in this construction, we do not color all edges of $K_n$ as we do not assign edges between $S_e$ and $S_f$ for disjoint $e$ and $f$ in $T$ to any of the $G_i$'s.  

If we choose any subset of each $S_{i \to j}$ and at most one vertex from each $S_{j \to i}$, we will get a clique in $G_i$.  Thus the number of cliques in $G_i$ is 
\[
	\prod_{e:e=j\to i} (1+|S_{e}|)\cdot \prod_{f:f=i\to k} 2^{|S_{f}|}.
\]
If we multiply these factors over all $i$, each edge of $T$ counts once towards each of the products above.  And so,
\begin{align*}
	\prod_{i=1}^r k(G_i)&= \prod_{e\in T} (1+ |S_e|) 2^{|S_{e}|}\\
	&= 2^n\prod_e (1+ |S_e|)\\
	&\geq 2^n \prod_e \frac{n}{\binom{r}2}\\
	&=2^n\left(\frac{n}{\binom{r}2}\right)^{\binom{r}2}.\qedhere
\end{align*}
\end{proof}

\section{Conclusion}
In this paper we have explored upper and lower bounds on the product and sum of the number of independent sets in a graph and its complement. More generally, we have also investigated the  fixed size and multicolor versions of these problems. Many interesting open problems remain. 

In Theorem \ref{thm:pil} we proved a tight upper bound on $\pi_t(G) = k_t(G_1)k_t(G_2)$ where $G_2=\overline{G_1}$. We made use of compression to show that the $G$ which attains the maximum must be a threshold graph. One may pursue a multicolor version of this theorem, i.e., finding a (tight) upper bound for $\prod_{k=1}^r k_t(G_i)$. As mentioned earlier, we know of no multicolor analogue of graph compression which makes the problem much more difficult. We trivially have an upper bound of $\binom{n}{t}^r\sim \frac{1}{t!^r}n^{tr}$. Partitioning $K_n$ into $r$ vertex disjoint cliques shows that there is a graph with $\prod_{k=1}^r k_t(G_i) \ge \binom{n/r}{t}^r \sim \frac{1}{r^{tr}t!^r}n^{tr}$. 

Another interesting place for improvement is in the lower bound for the multicolor product $\prod_{i=1}^r k(G_i)$ in Theorem \ref{thm:r-col-lb}. In this case, the proven lower bound and the tightness example differ by a factor of $3\log_3 r$ in the exponent. While finding the exact exponent may be a difficult problem, it may be nice to find a tightness example which differs only by a factor not depending on $r$. 

Finally, a possible area of future exploration is that of a lower bound on the multicolor fixed-size sum $\sum_{i=1}^r k_t(G_i)$.  The case of $r=3$ colors and triangles ($t=3$) is relatively well-known and was settled by Cummings et al.~\cite{CKPSTY}.  The authors of that paper also asked about triangles in graphs colored with more than three colors.

\bibliographystyle{amsplain}
\bibliography{nordgad}

\end{document}